\newtheorem{theorem}{Theorem}
\newtheorem{lemma}[theorem]{Lemma}
\newtheorem{corollary}[theorem]{Corollary}
\crefname{lemma}{lemma}{lemmas}
\Crefname{lemma}{Lemma}{Lemmas}
\crefname{theorem}{Theorem}{Theorems}
\Crefname{theorem}{Theorem}{Theorems}
\crefname{corollary}{corollary}{corollaries}
\Crefname{corollary}{Corollary}{Corollaries}
\crefname{equation}{}{}
\Crefname{equation}{}{}
\newcommand{\setI}{\mathcal{I}}
\newcommand{\setM}{\mathcal{M}}
\newcommand{\setP}{\mathcal{P}}
\newcommand\norm[2][{}]{\ensuremath{\|#2\|}_{#1}}
\newcommand\inv[2][1]{\left( #2 \right)^{-#1}}
\newcommand\Biginv[2][1]{\paren[\Big]{#2}^{-#1}}
\newcommand\expect[2][{}]{\ensuremath{\E_{#1} \bracket{#2}}}
\newcommand{\vect}[1]{\mathbf{#1}}
\newcommand{\va}{\vect{a}}  
\newcommand{\vb}{\vect{b}}
\newcommand{\vc}{\vect{c}}  
\newcommand{\vd}{\vect{d}}
\newcommand{\ve}{\vect{e}}
\newcommand{\vu}{\vect{u}}  
\newcommand{\vv}{\vect{v}}  
\newcommand{\vw}{\vect{w}}
\newcommand{\vx}{\vect{x}}  
\newcommand{\vy}{\vect{y}}  
\newcommand{\vz}{\vect{z}}
\newcommand{\mA}{\vect{A}}
\newcommand{\mD}{\vect{D}}
\newcommand{\mG}{\vect{G}}
\newcommand{\mH}{\vect{H}}
\newcommand{\mI}{\vect{I}}
\newcommand{\mJ}{\vect{J}}
\newcommand{\mK}{\vect{K}}
\newcommand{\mM}{\vect{M}}
\newcommand{\mN}{\vect{N}}
\newcommand{\mP}{\vect{P}}
\newcommand{\mT}{\vect{T}}
\newcommand{\mW}{\vect{W}}
\newcommand{\mX}{\vect{X}}
\newcommand{\mZ}{\vect{Z}}
\newcommand{\setS}{\mathcal{S}}
\newcommand{\setSbar}{{\overline{\mathcal{S}}}}
\newcommand{\setE}{\mathcal{E}}
\newcommand{\grvect}[1]{{\bm{#1}}}
\newcommand{\vbeta}{\grvect{\beta}}
\newcommand{\vtheta}{\grvect{\theta}} 
\newcommand{\vlambda}{\grvect{\lambda}} 
\newcommand{\vmu}{\grvect{\mu}} 
\newcommand{\vnu}{\grvect{\nu}}
\newcommand{\mTheta}{\grvect{\Theta}}
\newcommand{\mPi}{\grvect{\Pi}}
\newcommand{\mSigma}{\grvect{\Sigma}}
\DeclareMathOperator*{\argmin}{arg\,min}
\newcommand{\diag}[2][{}]{\mathrm{diag}_{#1}( #2)}
\newcommand{\set}[2][{}]{#1\{ #2 #1\}}
\newcommand{\paren}[2][{}]{#1( #2 #1)}
\newcommand{\bracket}[2][{}]{#1[ #2 #1]}
\def\vone{{\bf 1}}
\def\vzero{{\bf 0}}
\def\E{{\mathbb E}}
\def\ind{{\mathds 1}}
\def\defeq{:=}
\def\rdefeq{=:}
\newcommand{\asconv}{\xrightarrow{\mathrm{a.s.}}}
\newcommand{\dconv}{\xrightarrow{\mathrm{d}}}
\def\vbetahat{{\widehat{\vbeta}}}
\newcommand{\complexset}{\mathbb C}
\newcommand{\reals}{\mathbb{R}}
\newcommand{\nats}{\mathbb{N}}
\newcommand{\transp}{\top}
\newcommand{\cv}{\mathrm{CV}}
\newcommand{\iid}{\mathrm{i.i.d.}}
\newcommand{\normal}{\mathcal{N}}
\newcommand{\tr}{\mathrm{tr}}
\NewDocumentCommand{\intdx}{s O{} O{} m O{x}}{
  \IfBooleanTF {#1}{%
      \int_{#2}^{#3} {#4} \, {#5}%
  }{%
      \int_{#2}^{#3} {#4} \,\mathrm{d}{#5}%
  }%
}
\definecolor{codegreen}{rgb}{0,0.6,0}
\definecolor{codegray}{rgb}{0.5,0.5,0.5}
\definecolor{codepurple}{rgb}{0.58,0,0.82}
\definecolor{backcolor}{HTML}{f0f3f5}
\lstdefinestyle{mystyle}{
    backgroundcolor=\color{backcolor},   
    commentstyle=\color{codegreen},
    keywordstyle=\color{magenta},
    numberstyle=\tiny\color{codegray},
    stringstyle=\color{codepurple},
    basicstyle=\ttfamily\footnotesize,
    breakatwhitespace=false,         
    breaklines=true,                 
    captionpos=b,                    
    keepspaces=true,                 
    numbers=left,                    
    numbersep=5pt,                  
    showspaces=false,                
    showstringspaces=false,
    showtabs=false,                  
    tabsize=2
}
\title{RandALO: Out-of-sample risk estimation in no time flat}
\date{}
\author[1]{Parth T. Nobel}
\author[2]{Daniel LeJeune}
\author[2]{Emmanuel J.\ Cand\`{e}s}
\affil[1]{Department of Electrical Engineering, Stanford University}
\affil[2]{Department of Statistics, Stanford University}
\affil[ ]{\texttt{ptnobel@stanford.edu, daniel@dlej.net, candes@stanford.edu}}
\begin{document}

\maketitle

\begin{abstract}
Estimating out-of-sample risk for models trained on large high-dimensional datasets is an expensive but essential part of the machine learning process, enabling practitioners to optimally tune hyperparameters. Cross-validation (CV) serves as the de facto standard for risk estimation but poorly trades off high bias ($K$-fold CV) for computational cost (leave-one-out CV). We propose a randomized approximate leave-one-out (RandALO) risk estimator that is not only a consistent estimator of risk in high dimensions but also less computationally expensive than $K$-fold CV. We support our claims with extensive simulations on synthetic and real data and provide a user-friendly Python package implementing RandALO available on PyPI as \texttt{randalo} and at \url{https://github.com/cvxgrp/randalo}.
\end{abstract}

\section{Introduction}

Training machine learning models is an often expensive process, especially in large data settings. Not only is there significant cost in the fitting of individual models, but even more importantly, the best model must be chosen from a set of candidates parameterized by a set of ``hyperparameters'' indexing the models, and each of these models must be fitted and evaluated in order to make the optimal selection. As a result, model selection, also called hyperparameter tuning, tends to be the most computationally expensive part of the machine learning pipeline.

In order to evaluate models, we typically need to set aside unseen ``holdout'' data to estimate the risk of the model on new samples from the training distribution. When we have an abundance of training samples, such as in the millions or billions, we can afford to set aside a modest holdout set of tens of thousands of examples without compromising model performance. We can then simply evaluate the fitted model on the holdout set and obtain a high precision estimate of model risk, with the only major cost being the fitting of the model on the training data.

In even moderately large data regimes, however, when we have at most tens of thousands of possibly high-dimensional training samples, it is often not possible to set aside a sufficiently large holdout set without sacrificing the quality of our model fit. In these settings, the time-trusted technique for model evaluation is $K$-fold cross-validation (CV): the data is partitioned into $K$ roughly equal subsets, and each subset is used as a holdout set while the model is trained on the remaining data, and finally the model risks across each of the $K$ folds are averaged. In this way, we get the advantage of evaluating our model on a set of data the same size as the training data.

The downsides of CV are two-fold. Firstly, for each of the $K$ folds, a new model must be fit, increasing the computational cost of evaluating risk, and thus model selection, by roughly a factor of $K$.\footnote{The cost of training individual models on the $(K-1)/K$ fraction of the data is generally a bit less than the cost of training a model on the full dataset, so the total cost is a little less than $K$ times.} Secondly, and perhaps more alarmingly, $K$-fold CV provides an unbiased estimate only for the risk of a model trained on $n (K-1)/K$ data points, which can be quite different from the risk of a model trained on $n$ points in high dimensions \citep{donoho2011phase}. This bias only vanishes as $K$ approaches the number of training samples, at which point it becomes known as leave-one-out CV, at the expense of a tremendous computational cost.

\begin{figure}[t]
    \centering
    \includegraphics[width=\linewidth]{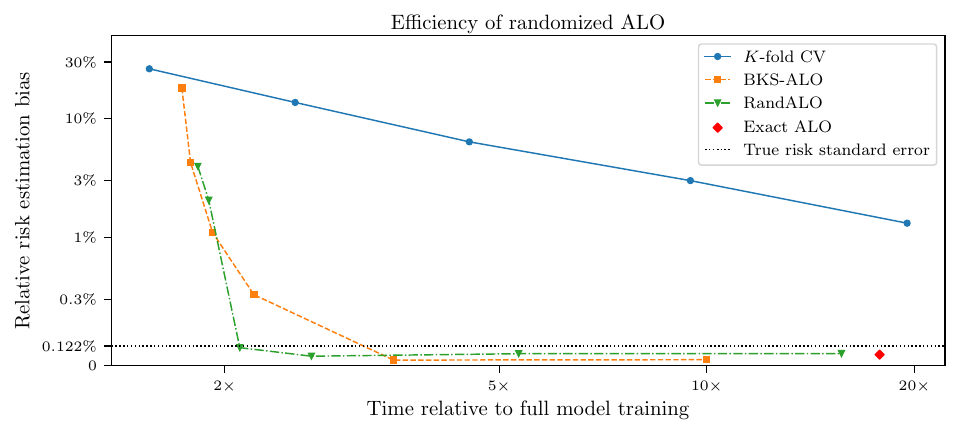}
    \caption{$K$-fold cross-validation (CV, solid blue, circles) provides a poor trade-off between risk estimation error and computational time on a high-dimensional lasso problem. Meanwhile, BKS-ALO (dashed orange, squares), a simplified version of our method, dominates CV in estimation bias and computational cost. Our fully debiased procedure RandALO (dash--dot green, triangles) goes further and reduces bias by an order of magnitude for the same computational cost, and both methods reach the same bias as exact ALO (red diamond) in a fraction of the time. Lines denote mean risk estimate bias and time over 100 trials. We report the relative risk estimation bias is computed as $|\hat{R} - R| / R$ for a particular mean risk estimate $\hat{R}$, where the true risk $R$ is estimated as the sample mean of the conditional risks given the training data. The $y$-axis is logarithmic above the true conditional risk standard error of $0.122\%$ (dotted, black) and linear below.}
    \label{fig:cv-tradeoff}
\end{figure}

In this work, we propose a randomzied risk estimation procedure (RandALO) that addresses both of these issues. \Cref{fig:cv-tradeoff} compares bias and real-world wall-clock time for $K$-fold CV to our method RandALO on a high-dimensional lasso problem (experimental details in  \Cref{sec:num:lasso}). Regardless of the choice of $K \in \set{2, 3, 5, 10, 20}$, we can implement our method with some choice of $m \in \set{10, 30, 100, 300, 1000, 3000}$ Jacobian--vector products and achieve lower bias and lower computational cost. 
RandALO provides very high quality risk estimates with only $0.1\%$ bias in around $2 \times$ the time of training a model, while we would have to use $K=20$ and nearly $20 \times$ the training time to achieve merely $1\%$ bias with $K$-fold CV.

Our method is based on the approximate leave-one-out (ALO) technique of \citet{rad2020alo}, which approximates leave-one-out CV using a single step of Newton's method for each training point. This technique has been shown to enjoy the same consistency properties as leave-one-out CV for large high-dimensional datasets, thus being more accurate than $K$-fold CV.
However, a barrier to applying ALO is its poor scaling with dataset size.
To address this,
we employ randomized numerical linear algebra techniques to reduce the computation down to the cost of solving a constant number of quadratic programs involving the training data, which is enough to be computational advantageous against even the low cost of $5$-fold CV with highly optimized solvers for methods such as the lasso.
For non-standard and less optimized solvers, the computational advantage is even more dramatic.

We have also created a Python package, available on PyPI as \texttt{randalo} and at \url{https://github.com/cvxgrp/randalo}, that makes applying RandALO as simple as cross-validation. 
Users can use their solver of choice to first fit the model on all of the training data, and then use our package to estimate its risk.
For example, for an ordinary scikit-learn~\citep{scikit-learn} \texttt{Lasso} model, we can obtain a RandALO risk estimate with a single additional line of code:

\lstinputlisting[language=python]{include/sklearn_lasso_snippet.py}

\paragraph{Contributions.}
Concretely, our contributions are as follows:
\begin{enumerate}
    \item We develop a randomized method RandALO (\Cref{alg:alo-randomized}) for efficiently and accurately computing ALO given access to a Jacobian--vector product oracle for the fully trained model. 
    \item We prove the asymptotic normality and decorrelation of randomized diagonal estimation for Jacobians of generalized ridge models with high-dimensional elliptical sub-exponential data (\Cref{thm:bks-clt}).
    \item We show that Jacobian--vector products for linear models with non-smooth regularizers can be computed efficiently via appropriate quadratic programs (\Cref{thm:derivative-as-qp}).
    \item We provide extensive experiments in \Cref{sec:num} demonstrating the advantage of randomized ALO over $K$-fold CV in terms of both risk estimation and computation across a wide variety of linear models and high-dimensional synthetic and real-world datasets.
    \item We provide a Python software package that enables the easy application of RandALO to real-world machine learning workflows.
\end{enumerate}



\subsection{Related work}

Risk estimation is an important aspect of model selection and has a wide literature and long history in machine learning: we refer the reader to \citet{friedman_hastie_tibshirani_2009}, Chapter 7; \citet{arlot_celisse_2010}; and \citet{zhang_yang_2015} for an overview of common techniques, particularly emphasizing cross-validation (CV). Generalized cross-validation (GCV) \citep{craven_wahba_1979} is an approximation to leave-one-out (LOO) CV that can be efficiently implemented using randomized methods \citep{hutchinson1989trace} and has recently been shown to be consistent for linear in high dimensions under certain random matrix assumptions~\citep{patil2021uniform,bellec2020out} and in sketched models~\citep{patil2024asymptotically}.

Approximate leave-one-out (ALO) generalizes GCV by performing individual Newton steps toward the LOO objective for each training point rather than making a single uniform correction, coinciding with exact LOO for ridge regression. ALO was proposed for logistic regression  \citep{pregibon1981logistic} and kernel logistic regression \citep{Cawley2008} via iteratively reweighted least squares. More recently, \citet{rad2020alo} showed consistency of ALO under appropriate random data assumptions in high dimensions for arbitrary losses and regularizers \citep{xu2021consistent}, including non-smooth penalties such as $\ell_1$ \citep{auddy2024approximate}.
Related and complementary to our work,
\citet{stephenson2020approximate} propose to exploit the sparsity in the linear model to reduce computation in ALO.
In another direction, \citet{luo2023iterative} share our aim in making ALO more useful by providing an extension to models obtained from iterative solvers that is consistent along the entire optimization path. Techniques based on approximate leave-one-out have also been among the most successful methods for data attribution in deep learning \citep{park2023trak}.

Randomized numerical methods and risk estimation have been paired for decades, with the randomized trace estimator of \citet{hutchinson1989trace} originally proposed for estimating the degrees of freedom quantity in GCV used for correcting residuals. \citet{bekas2007diagonal} extended Hutchinson's approach to a method for estimating the diagonal elements of a matrix, which we base our method upon. \citet{baston2022stochastic} combined this estimator with ideas from Hutch++ \citep{meyer2021hutch} to create an improved estimator called Diag++ when the computational budget can be directed towards a prominent low rank component, and \citet{epperly2024xtrace} improve this estimator by enforcing exchangeability in their method XDiag. We found both Diag++ and XDiag to be less accurate than the procedure of \citet{bekas2007diagonal} when the number of matrix--vector products is restricted to be much below the effective rank of the matrix and so did not incorporate them into our method.

\section{Approximate leave-one-out for linear models}
\label{sec:alo}

We consider the class of linear models obtained by empirical risk minimization, having the form
\begin{align}
    \widehat{\vbeta} = \argmin_\vbeta \sum_{i=1}^n \ell(y_i, \vx_i^\transp \vbeta) + r(\vbeta),
    \label{eq:erm}
\end{align}
where $((\vx_i, y_i))_{i=1}^n \in (\reals^p \times \reals)^n$ is an i.i.d.\ training data set, $\ell \colon \reals \times \reals \to \reals$ is a twice differentiable loss function, and $r \colon \reals^p \to \reals$ is a possibly non-differentiable regularizer. In model selection, we aim to optimize the \emph{risk} of the model for some risk function $\phi \colon \reals \times \reals \to \reals$:
\begin{align}
    R \defeq \expect{\phi(y, \vx^\transp \widehat\vbeta)}.
\end{align}
Here the expectation is taken over an independent sample $(\vx, y)$ drawn from the same distribution as the training data, as well as over the training data used to fit the model $\widehat{\vbeta}$.\footnote{While in principle one would prefer the conditional risk given $\widehat{\vbeta}$, cross-validation is only able to estimate marginal risk \citep{bates2023cv}.}
Given a partition $\pi$ of $[n]$,
the cross-validation (CV) family of risk estimators have the following form:
\begin{align}
    \hat{R}_\cv \defeq \frac{1}{n} \sum_{\setP \in \pi} \sum_{i \in \setP} \phi(y_i, \vx_i^\transp \widehat{\vbeta}_{-\setP})
    \quad \text{for} \quad
    \widehat{\vbeta}_{-\setP} \defeq \argmin_\vbeta \sum_{i \notin \setP} \ell(y_i, \vx_i^\transp \vbeta) + r(\vbeta).
    \label{eq:cv}
\end{align}
The popular $K$-fold CV consists of a partition of $K$ subsets approximately equal in size, while leave-one-out (LOO) CV uses the partition of singletons $\pi = \set{\set{1}, \set{2}, \ldots, \set{n}}$. Choosing the cross-validation partition is an accuracy--computation trade-off, as using a few large subsets (as in $K$-fold CV) results in relatively quick but low-quality biased risk estimates, while using many small subsets (as in LOO CV) results in high-quality but computationally intensive risk estimates.

Leave-one-out risk estimation na\"ively has a cost of essentially $n$ times the cost of training the model, which becomes prohibitive for moderately large $n$. However, in certain settings, there are ``shortcut'' formulas that enable the efficient computation of the LOO predictions starting from the model trained on all of the data. 
Notably, in the setting of ridge regression where $\ell(y, z) = (y - z)^2$ and $r(\vbeta) = \lambda \norm[2]{\vbeta}^2$,
we have the following exact form of the LOO prediction $\hat{y}_{-i} \defeq \vx_i^\transp \widehat{\vbeta}_{-\set{i}}$ in terms of the full prediction $\hat{y}_i \defeq \vx_i^\transp \widehat{\vbeta}$ and the $n \times p$ data matrix $\mX$ formed by stacking the data $\vx_i$ as rows:
\begin{align}
    \hat{y}_{-i} = \frac{\hat{y}_i - J_{ii} y_i}{1 - J_{ii}}
    \quad\text{where}\quad
    \mJ = \mX \inv{\mX^\transp \mX + \lambda \mI} \mX^\transp.
\end{align}
The matrix $\mJ$ is often called the ``hat'' matrix or linear smoothing matrix, and it is also the Jacobian matrix $\mJ = \partial \widehat{\vy} / \partial \vy$ of predictions of the model $\widehat{\vy} = (\hat{y}_i)_{i=1}^n$ trained on the full data with respect to the training labels $\vy = (y_i)_{i=1}^n$, since $\widehat{\vy} = \mJ \vy$. 
Computing $\mJ$ and extracting $J_{ii}$ has the same computational complexity as computing the exact full ridge solution $\widehat{\vbeta}$, such as when using a Cholesky decomposition of $\mX^\transp \mX + \lambda \mI$ which can be reused to compute $\mJ$, and so LOO CV can be performed at minimal additional cost.


Outside of ridge regression, LOO can be approximated for each data point by a single step of Netwon's method towards minimizing the CV objective in the right-hand side of \cref{eq:cv} starting from the full solution $\widehat{\vbeta}$. 
This idea was proposed for logistic regression as early as \cite{pregibon1981logistic}, but was only recently proven to provide consistent risk estimation in high dimensions  \citep{rad2020alo}. For general twice differentiable losses and regularizers (note: ALO can also be applied with non-smooth regularizers, as we describe in \Cref{sec:jvp}), the approximate leave-one-out (ALO) prediction is given by
\begin{align}
    \tilde{y}_{i} \defeq \hat{y}_i +  \frac{\ell'(y_i, \hat{y}_i) \tilde{J}_{ii}}{\ell''(y_i, \hat{y}_i) (1 - \tilde{J}_{ii})},
    \quad \text{where} \quad
    \widetilde{\mJ} = \mX (\mX^\transp \mH_\ell \mX + \nabla^2 r(\widehat{\vbeta}))^{-1} \mX^\top \mH_\ell.
    \label{eq:alo}
\end{align}
Here $\ell'(y, z) = \partial \ell(y, z) / \partial z$ and $\ell''(y, z) = \partial^2 \ell(y, z) / \partial z^2$, while $\mH_\ell = \diag{(\ell''(y_i, \hat{y}_i)_{i=1}^n}$. The matrix $\widetilde{\mJ}$ is closely related to the Jacobian $\mJ = \partial \widehat{\vy}/\partial \vy$ via the scaling transformation $\tilde{J}_{ij} = -J_{ij} \ell''(y_j, \hat{y}_j) / (\partial \ell'(y_j, \hat{y}_j) / \partial y_j)$, and so with some abuse of nomenclature we also refer to $\widetilde{\mJ}$ as the Jacobian. Note that ALO coincides exactly with LOO in the case of ridge regression. We also note that by formulating ALO in terms of the Jacobian, the corrected predictions $\tilde{y}_{i}$ can be computed without dependence on a specific parameterization of the prediction function. While we consider linear models for the remainder of this work, we describe in \Cref{sec:generic-alo} how ALO can be derived and applied for arbitrary nonlinear prediction functions.

Although LOO and ALO have similar cost to obtaining models via direct methods such as matrix inversion, 
predictors in machine learning 
are often found by iterative algorithms that return a high-quality approximate solution very quickly. In ridge regression for example, a solution can be found in time $O(\sqrt{\kappa} np)$ where $\kappa$ is the condition number of $\mX^\transp \mX + \lambda \mI$ (see, e.g., \citealp[\S11.3]{golub2013matrix}). This can be significantly less time than the $O(p^3)$ time needed to perform the inversion in the computation of $\mJ$ using direct numerical techniques. In general, this means that it can be significantly faster to obtain a risk estimate via $K$-fold CV which does not require computing the diagonals of $\widetilde{\mJ}$, making exact ALO unattractive compared to $K$-fold CV for large-scale data. However, by leveraging randomized methods, we can exploit the same iterative algorithms to approximate ALO in time comparable to or better than the solving for the original full predictor, yielding a risk estimate that often outperforms $K$-fold CV in both accuracy and computational cost.

\section{Randomized approximate leave-one-out}

The computational bottleneck of ALO lies in extracting the diagonals of the Jacobian $\widetilde{\mJ}$. Doing this exactly requires first realizing the full matrix $\widetilde{\mJ}$, which is prohibitive in large data settings. This motivates our use of randomized techniques for estimating the diagonal elements of $\widetilde{\mJ}$ using the extension of Hutchinson's trace estimator
proposed by \citet{bekas2007diagonal}. We refer to this method as ``BKS'' after the names of the authors. This randomized diagonal estimate requires only $m$ Jacobian--vector products:
\begin{align}
    \vmu = \frac{1}{m} \sum_{k=1}^m (\widetilde{\mJ} \vw_k) \odot \vw_k,
\end{align}
where $\vw_k \in \reals^n$ are i.i.d.\ Rademacher random vectors, taking values $\set{\pm 1}$ with equal probability, and $\odot$ denotes the element-wise product. It is straightforward to see that $\expect{\vmu} = \diag{\widetilde{\mJ}}$ and that $\vmu$ converges to $\diag{\widetilde{\mJ}}$ almost surely as $m \to \infty$ by the strong law of large numbers. However, we need to keep $m$ small in order to minimize computation; using $m = n$ for example would have similar computational cost to evaluating $\widetilde{\mJ}$ in full.

Fortunately, for large scale machine learning problems, the number of Jacobian--vector products needed to reach a desired level of accuracy generally does not increase with the scale of the problem.
We capture this formally with the following theorem under elliptical random matrix assumptions on the data, specializing to the case of generalized ridge regression with the regularizer $r(\vbeta) = \tfrac{1}{2} \vbeta^\transp \mG \vbeta$. Note that this matches the general form of \cref{eq:alo}, aside from possible dependence of $\mG$ on $\vbeta$. We prove this result for sub-exponential random data with arbitrary covariance structure and thus can expect the takeaways to be quite general.

\begin{theorem}
    \label{thm:bks-clt}
    Let $\widetilde{\mJ} = \mX \inv{\mX^\transp \mX + \mG} \mX^\transp$ for $\mX = \mT^{1/2} \mZ \mSigma^{1/2}$, where $\mT \in \reals^{n \times n}$ is a diagonal matrix with positive diagonal elements $t_i$ in a finite interval $\setI$ separated from $0$, $\mZ \in \reals^{n \times p}$ are i.i.d.\ zero-mean and unit-variance $\alpha$-sub-exponential variables for some $0 < \alpha \leq 1$, and $\mSigma, \mG \in \reals^{p \times p}$ are positive semidefinite matrices such that $n\mG^{-1/2}\mSigma \mG^{-1/2}$ has eigenvalues in $\setI$. Then for any $m \in \nats$ and $i \neq j \in [n]$, we have the following convergence almost surely in the limit as $n, p \to \infty$ such that $0 < \liminf n/p \leq \limsup n/p < \infty$, 
    \begin{align}
        \frac{\mu_i - \tilde{J}_{ii}}{\frac{\sqrt{t_i \nu}}{(1 + t_i \eta)}} \,\Big|\, \mX 
        \dconv \normal(0, \tfrac{1}{m}) 
        \quad\text{and}\quad
        \E \bracket{\paren{\mu_i - \tilde{J}_{ii}}
        \paren{\mu_j - \tilde{J}_{jj}} \,|\, \mX
        } \to 0,
        \label{eq:thm:bks-clt}
    \end{align}
    where $\eta = \tr [\mSigma \inv{\mX^\transp \mX + \mG}]$ and $\nu = \tr [\mSigma \inv{\mX^\transp \mX + \mG} \mX^\transp \mX \inv{\mX^\transp \mX + \mG}]$. 
\end{theorem}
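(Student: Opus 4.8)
\emph{Proof strategy.} The plan is to condition on $\mX$, reduce both assertions to elementary facts about bilinear forms in the Rademacher vectors, and then identify the asymptotic conditional variance by a resolvent computation. Since $w_{k,i}^2 = 1$,
\[
    \mu_i - \tilde{J}_{ii} = \frac{1}{m} \sum_{k=1}^m w_{k,i} \sum_{j \neq i} \tilde{J}_{ij}\, w_{k,j},
\]
which conditionally on $\mX$ is an average of $m$ conditionally i.i.d.\ mean-zero terms, and independence of the Rademacher coordinates yields
\[
    \E[(\mu_i - \tilde{J}_{ii})^2 \mid \mX] = \frac{1}{m} \sum_{j \neq i} \tilde{J}_{ij}^2
    \qquad\text{and}\qquad
    \E[(\mu_i - \tilde{J}_{ii})(\mu_j - \tilde{J}_{jj}) \mid \mX] = \frac{\tilde{J}_{ij}^2}{m}.
\]
Hence the decorrelation claim reduces to $\tilde{J}_{ij} \to 0$ almost surely for $i \neq j$. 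For the CLT, put $\sigma_i := (\sum_{j \neq i} \tilde{J}_{ij}^2)^{1/2}$; since $m$ is fixed it suffices that a single term $\sigma_i^{-1} w_{k,i} \sum_{j \neq i} \tilde{J}_{ij} w_{k,j}$ converge conditionally in distribution to $\normal(0,1)$, as the average of the $m$ conditionally i.i.d.\ copies then converges to $\normal(0, 1/m)$. The weighted Rademacher sum $\sum_{j \neq i} \tilde{J}_{ij} w_{k,j}$ is asymptotically $\normal(0, \sigma_i^2)$ by the Lindeberg--Feller theorem provided $\max_{j \neq i} \tilde{J}_{ij}^2 = o(\sigma_i^2)$ almost surely, and multiplying by the sign $w_{k,i}$, independent of that sum, preserves the (symmetric) Gaussian limit; Slutsky's lemma then lets us replace $\sigma_i$ by $\sqrt{t_i \nu}/(1 + t_i \eta)$ once $\sigma_i^2 (1 + t_i\eta)^2/(t_i\nu) \to 1$ almost surely.

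Everything thus comes down to three almost-sure statements about $\widetilde{\mJ}$: (i) $\max_{i \neq j} |\tilde{J}_{ij}| \to 0$; (ii) $\max_{j \neq i} \tilde{J}_{ij}^2 = o(\sigma_i^2)$; (iii) $\sigma_i^2 = t_i\nu/(1 + t_i\eta)^2 + o(1)$. The push-through identity gives $\widetilde{\mJ} = \mI - \mQ$ with $\mQ := (\widetilde{\mX}\widetilde{\mX}^\transp + \mI)^{-1}$ and $\widetilde{\mX} := \mX\mG^{-1/2}$, and since $\mQ$ is symmetric, $\sum_{j \neq i} \tilde{J}_{ij}^2 = (\mQ^2)_{ii} - Q_{ii}^2$. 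I would introduce $\mQ(\lambda) := (\widetilde{\mX}\widetilde{\mX}^\transp + \lambda\mI)^{-1}$, so that $(\mQ^2)_{ii} = -\partial_\lambda Q_{ii}(\lambda)\big|_{\lambda = 1}$, and evaluate $Q_{ii}(\lambda)$ by a Schur complement in row and column $i$, using $\mI - \widetilde{\mX}_{-i}^\transp(\widetilde{\mX}_{-i}\widetilde{\mX}_{-i}^\transp + \lambda\mI)^{-1}\widetilde{\mX}_{-i} = \lambda(\widetilde{\mX}_{-i}^\transp\widetilde{\mX}_{-i} + \lambda\mI)^{-1}$ and undoing the $\mG^{-1/2}$ conjugation, which yields
\[
    Q_{ii}(\lambda) = \frac{1}{\lambda(1 + \vx_i^\transp (\mX_{-i}^\transp \mX_{-i} + \lambda\mG)^{-1} \vx_i)},
\]
where $\vx_i^\transp$ is the $i$-th row of $\mX$ and $\mX_{-i}$ is $\mX$ with that row deleted, so the inverse is independent of $\vx_i$. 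Writing $\vx_i = \sqrt{t_i}\,\mSigma^{1/2}\vz_i$ with $\vz_i^\transp$ the $i$-th row of $\mZ$, a Hanson--Wright bound for $\alpha$-sub-exponential quadratic forms — using that $\mSigma^{1/2}(\mX_{-i}^\transp\mX_{-i} + \lambda\mG)^{-1}\mSigma^{1/2}$ has operator norm $O(1/n)$ by the eigenvalue hypothesis on $\mG^{-1/2}\mSigma\mG^{-1/2}$, hence negligible Frobenius norm — followed by a rank-one perturbation bound for $\tr[\mSigma\,(\cdot)^{-1}]$ gives $Q_{ii}(\lambda) = [\lambda(1 + t_i\eta(\lambda))]^{-1} + o(1)$ almost surely, where $\eta(\lambda) := \tr[\mSigma(\mX^\transp\mX + \lambda\mG)^{-1}]$ and $\eta(1) = \eta$.

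Differentiating this asymptotic identity at $\lambda = 1$ — justified by noting that $\lambda \mapsto Q_{ii}(\lambda)$ is holomorphic and uniformly bounded on a complex neighborhood of $1$ (here $\|\widetilde{\mX}\widetilde{\mX}^\transp\| = O(1)$ almost surely, from a standard operator-norm bound for $\mZ$ that survives for $\alpha$-sub-exponential entries after truncation, keeps $-1$ at positive distance from the spectrum) together with a normal-families/Vitali argument — gives
\[
    (\mQ^2)_{ii} = -\partial_\lambda Q_{ii}(\lambda)\big|_{\lambda = 1} = \frac{1 + t_i\eta + t_i\,\eta'(1)}{(1 + t_i\eta)^2} + o(1).
\]
A one-line matrix identity, using $\mG = (\mX^\transp\mX + \mG) - \mX^\transp\mX$, shows $\eta'(1) = -\tr[\mSigma(\mX^\transp\mX + \mG)^{-1}\mG(\mX^\transp\mX + \mG)^{-1}] = \nu - \eta$, so $(\mQ^2)_{ii} = (1 + t_i\nu)/(1 + t_i\eta)^2 + o(1)$ and therefore
\[
    \sum_{j \neq i} \tilde{J}_{ij}^2 = (\mQ^2)_{ii} - Q_{ii}^2 = \frac{1 + t_i\nu}{(1 + t_i\eta)^2} - \frac{1}{(1 + t_i\eta)^2} + o(1) = \frac{t_i\nu}{(1 + t_i\eta)^2} + o(1)
\]
almost surely, which is exactly $\sigma_i^2$ and settles (iii). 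Statement (i) follows from the same resolvent-identity and Hanson--Wright estimates, which give the off-diagonal bound $\max_{i \neq j} |Q_{ij}| = o(1)$ away from the spectrum; and since $t_i \in \setI$ and $0 < \nu \leq \eta$ with both of constant order under the hypotheses (so $\sigma_i^2$ is bounded away from $0$ and $\infty$ almost surely), (i) implies (ii). Feeding (i)--(iii) into the reduction of the first paragraph completes the proof.

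I expect the two analytic points just flagged to be the main obstacles. First, transferring the $\lambda$-derivative through the almost-sure limit to pass from $Q_{ii}(\lambda)$ to $(\mQ^2)_{ii}$ requires the $o(1)$ error in $Q_{ii}(\lambda) = [\lambda(1 + t_i\eta(\lambda))]^{-1} + o(1)$ to be controlled uniformly on a complex neighborhood of $\lambda = 1$. Second — and this is what keeps the argument from being routine — because the data is only $\alpha$-sub-exponential rather than sub-Gaussian, each concentration step (the operator-norm bound on $\mZ$, the quadratic-form estimates, the entrywise resolvent bounds) must be carried out with an $\alpha$-sub-exponential Hanson--Wright inequality and typically a preliminary truncation of the entries of $\mZ$; the failure probabilities are then only polynomially small, so one fixes a polynomial-rate error, union-bounds over the $O(n^2)$ index pairs $(i,j)$, and applies Borel--Cantelli to upgrade to almost-sure convergence along the whole sequence.
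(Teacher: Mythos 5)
Your proposal is correct, and its probabilistic skeleton coincides with the paper's: condition on $\mX$, reduce to a single Rademacher draw, apply a Lindeberg/Lyapunov central limit theorem to the bilinear form $w_i\sum_{j\neq i}\tilde J_{ij}w_j$, use a leave-one-out (Schur complement / Woodbury) identity to extract $\vx_i$ and expose the $t_i$-dependence through $Q_{ii}\approx(1+t_i\eta)^{-1}$, control all quadratic forms with the $\alpha$-sub-exponential Hanson--Wright inequality, and upgrade to almost-sure statements via union bounds and Borel--Cantelli; the decorrelation claim reduces in both arguments to $\tilde J_{ij}^2\to 0$. Where you genuinely diverge is in identifying the asymptotic conditional variance. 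The paper computes $\sum_{j\neq i}\tilde J_{ij}^2$ as the quadratic form $\vx_i^\transp(\mX_{-i}^\transp\mX_{-i}+n\mI)^{-1}\mX_{-i}^\transp\mX_{-i}(\mX_{-i}^\transp\mX_{-i}+n\mI)^{-1}\vx_i$ (up to the $(1+c_i)^{-2}$ factor), concentrates it with Hanson--Wright, and then invokes the Rubio--Mestre deterministic-equivalent lemma, extended to second-order resolvents via derivatives of asymptotic equivalences, to replace $\mX_{-i}$ by $\mX$ and land on $\nu$. You instead write $\widetilde\mJ=\mI-\mQ$, note $\sum_{j\neq i}\tilde J_{ij}^2=(\mQ^2)_{ii}-Q_{ii}^2=-\partial_\lambda Q_{ii}(\lambda)|_{\lambda=1}-Q_{ii}^2$, and differentiate the asymptotic identity $Q_{ii}(\lambda)=[\lambda(1+t_i\eta(\lambda))]^{-1}+o(1)$ through a Vitali/normal-families argument, with the clean algebraic identity $\eta'(1)=\nu-\eta$ doing the rest. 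The two routes are algebraically equivalent (your $\sigma_i^2$ is exactly the paper's normalized quadratic form), but yours avoids citing the Rubio--Mestre lemma and its derivative extension at the cost of establishing uniform control of the resolvent error on a complex neighborhood of $\lambda=1$ --- a standard but nontrivial step that you correctly flag. The only point you assert rather than prove is that $\nu$ is bounded away from zero (needed for the Lindeberg condition and for the normalization to make sense); the paper devotes a smallest-singular-value argument to this, though an operator-norm upper bound on $\mX$ combined with $\tr[\mX^\transp\mX]=\Theta(np)$ also suffices.
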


\begin{proof}[Proof sketch]
As discussed above, the BKS estimator is unbiased and so $\expect{\mu_i} = \expect{\tilde{J}_{ii}}$. In the case $m = 1$, the remainder has the form $\mu_i - \tilde{J}_{ii} = \sum_{j \neq i} w_i \vx_i^\transp (\mX^\transp \mX + \mG)^{-1} \vx_j w_j$. We then apply Lyapunov's central limit theorem over the randomness in $\vw$ with an appropriate argument that the terms of the sum are not too sparse. Application of the Woodbury identity allows us to extract $\vx_i$ from inside the inverse and determine the influence of $t_i$, and then using random matrix theory we can argue that $\vx_i$ can be reintroduced to obtain the limiting values of $\eta$ and $\nu$. To show uncorrelatedness, we observe that $\E \bracket{\paren{\mu_i - \tilde{J}_{ii}}
\paren{\mu_j - \tilde{J}_{jj}} \,|\, \mX
} = (\vx_i^\transp (\mX^\transp \mX + \mG)^{-1} \vx_j)^2$, which asymptotically vanishes due to the independence of $\vx_i$ and $\vx_j$.
We give the proof details along with the definition of $\alpha$-sub-exponential variables in \Cref{sec:thm:bks-clt:proof}.
\end{proof}

Although the statement of \Cref{thm:bks-clt} is asymptotic in $n$ and $p$, the distributional convergence occurs very quickly. We illustrate this on a small problem with $n = 200$ and $p = 150$ in \Cref{fig:bks_viz}. We generate a single fixed dataset with $Z_{ij}$ taking values $\set{\pm 1}$ with equal probability, $\sqrt{t}_i \in \set{1, 2, 3, 4}$ with equal proportion, $\mSigma = \diag{(\sigma_j^2)_{j=1}^p}$ with $\sigma_j \in \set{1, 2}$ with equal proportion, and $\mG = n \mI$. Then over 1000 random trials of drawing $\vw_k$ and computing $m=10$ Jacobian--vector products, we examine the empirical distributions of the results. First, in the left-most plot, we compare the histogram of the $\mu_i$'s ($n \times 1000 = {}$200,000 values total) to the asymptotic density
\begin{align}
    dF(x) = 
    \frac{1}{n} \sum_{i=1}^n \frac{1 + t_i \eta}{\sqrt{2 \pi t_i \nu}} \exp \set[\Bigg]{-\frac{(x - \tilde{J}_{ii})^2 (1 + t_i \eta)^2}{2 t_i \nu}} dx,
\end{align}
which is the convolution of the Gaussian error distributions from \Cref{thm:bks-clt} with the point masses at each $\tilde{J}_{ii}$, revealing a near-exact match. In the middle plot, we compare the histogram of $z$-scores---that is, left-most expression in \cref{eq:thm:bks-clt}---another 200,000 values, with the standard normal density, but this time for only $m=1$ to show that Gaussianity arises without any averaging. In the right-most plot, we demonstrate the uncorrelatedness of the errors by taking the $z$-scores of a single Jacobian--vector product and showing that successive pairs $z_i$ and $z_{i+1}$ (199 pairs total) are uncorrelated, as predicted, meaning that we can treat the errors as independent in downstream analysis.

The key takeaway for developing an efficient method for computing ALO is that since the $\eta$ and $\nu$ are $O(1)$ even as $n, p \to \infty$, the variance of the $\mu_i$'s are $O(1/m)$ regardless of problem size. Furthermore, since the noise is asymptotically uncorrelated across $i$, it is easy to understand the effect on estimation error downstream. On the other hand, however, the quantities $\eta$ and $\nu$ also do not tend to vanish with problem size, so if we want to minimize the number of Jacobian--vector products, we must be able to handle this non-vanishing noise effectively.

\begin{figure}[t]
    \centering
    \includegraphics[width=\linewidth]{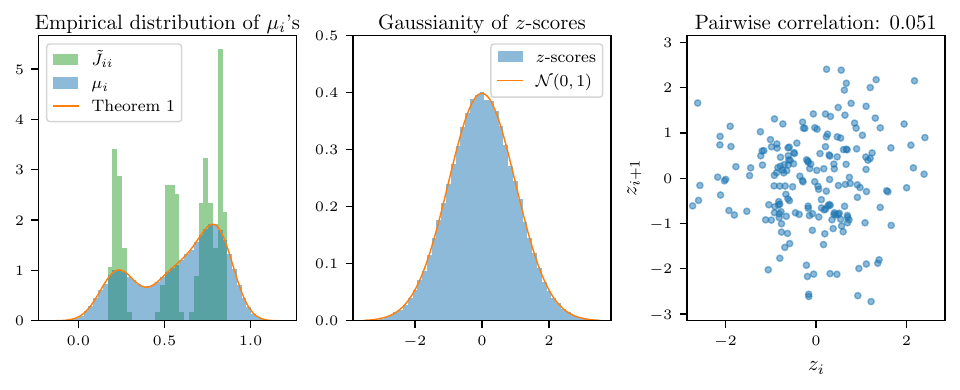}
    \caption{\Cref{thm:bks-clt} provides a very accurate characterization of randomized diagonal estimation even for a fairly small problem with $n = 200$ and $p = 150$. \textbf{Left:} The empirical distribution of $\mu_i$ for $m=10$ over 1000 trials (with randomness only over the vectors $\vw_k$) exactly matches the mixture of Gaussians centered at each $\tilde{J}_{ii}$ predicted by \Cref{thm:bks-clt}. 
    \textbf{Middle:} Taking the $z$-scores of the individual Jacobian--vector products $(\widetilde{\mJ} \vw_k) \odot \vw_k$ from the same experiment, the empirical distribution is well described by the standard normal. 
    \textbf{Right:} Looking at the $z$-scores for a single Jacobian--vector product, the pairs of successive elements of the resulting vector are uncorrelated as predicted by the asymptotics.}
    \label{fig:bks_viz}
\end{figure}

\subsection{Dealing with noise: Inversion sensitivity}

When applying ALO, each corrected prediction $\tilde{y}_i$ is an affine function of $\tilde{J}_{ii} / (1 - \tilde{J}_{ii})$. Because of the division by $1 - \tilde{J}_{ii}$, this means that ALO is extremely sensitive to noise in the estimation of $\tilde{J}_{ii}$. Unfortunately, by \Cref{thm:bks-clt}, we know that for any $m$, there is always some nonzero probability that $\mu_i$ will be near or greater than 1, which means that na\"ively plugging in the BKS diagonal estimate would sometimes result in extremely incorrect predictions $\tilde{y}_i$.

It is straightforward to check that the values $\tilde{J}_{ii}$ must lie in $[0, 1]$, so one possible solution would be to clip the values of $\mu_i$ if they are larger than $1 - \varepsilon$ to avoid near division by 0 or an incorrect sign. However, it is difficult to choose $\varepsilon$ robustly: for example, as ridge regression approaches interpolating least squares in high dimensions, the values of $J_{ii}$ approach 1, so a non-adaptive value of $\varepsilon$ would give an inconsistent result. A more sophisticated approach would be to perform minimum mean squared error (MMSE) estimation of $\tilde{J}_{ii}$ given the individual diagonal estimates $(\widetilde{\mJ} \vw_k) \odot \vw_k$ and an appropriate prior on $\tilde{J}_{ii}$ supported on $[0, 1]$. Since these diagonal estimates are uncorrelated Gaussian variables by \Cref{thm:bks-clt}, the sufficient statistics for this estimation problem are the sample means $\mu_i$ and the sample variances
\begin{align}
    \sigma_i^2 = \frac{1}{m - 1} \sum_{k=1}^m ([(\widetilde{\mJ} \vw_k) \odot \vw_k]_i - \mu_i)^2.
\end{align}
If we assume that $\sigma_i^2$'s are sufficiently well estimated to plug in in place of the true variances, we can compute the posterior distribution of $\tilde{J}_{ii}$ given $\mu_i$ and $\sigma_i^2$ for a uniform prior on $[0, 1]$:
\begin{align}
    p(\tilde{J}_{ii} | \mu_i, \sigma_i^2) \propto p(\mu_i | \tilde{J}_{ii}, \sigma_i^2) \ind_{[0, 1]}(\tilde{J}_{ii})
    \propto 
    \exp\set[\Big]{-\frac{m (\mu_i - \tilde{J}_{ii})^2}{2 \sigma_i^2}} \ind_{[0, 1]}(\tilde{J}_{ii}).
\end{align}
Thus, the MMSE estimator is the conditional mean of this distribution, which is the mean of a truncated normal distribution with location and scale parameters $\mu_i$ and $\sigma_i / \sqrt{m}$ on the interval $[0, 1]$. Numerical functions for evaluating this mean are commonly available in standard scientific computing packages, making this estimator computationally efficient and trivial to implement. We demonstrate the effectiveness of this approach in \Cref{fig:truncnorm-viz}, and we refer to the method in which we plug in the MMSE estimates of $\tilde{J}_{ii}$ into \cref{eq:alo} and use the resulting $\tilde{y}_i$ to evaluate risk as BKS-ALO.

This simple MMSE estimation approach enables us to gracefully handle noise that would otherwise need to be handled downstream in the algorithm using ad hoc outlier detection. Furthermore, it still has asymptotic normality in $m$, matching the distribution of $\mu_i$, and so makes it straightforward to reason about the remaining bias introduced by noisy estimation of diagonals in ALO.

\begin{figure}[t]
    \centering
    \includegraphics[width=\textwidth]{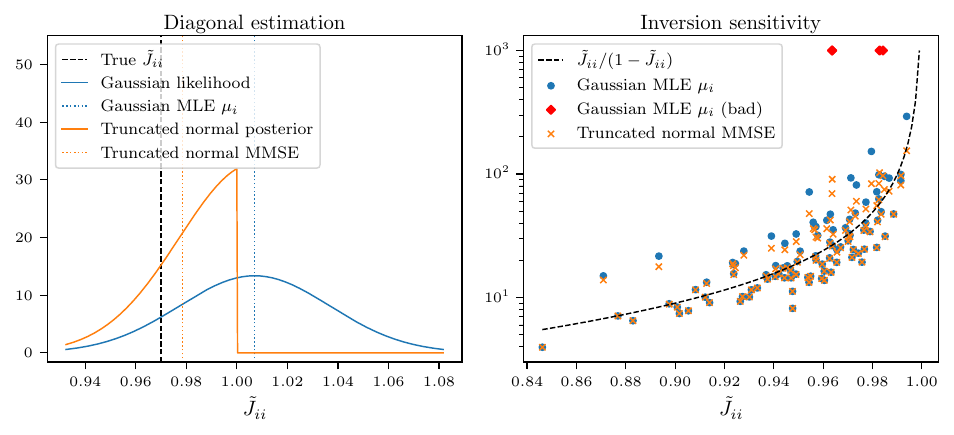}
    \caption{
    \textbf{Left:} Minimum mean squared error (MMSE) estimation using a uniform prior on $\tilde{J}_{ii}$ (orange) provides a much better estimate than the na\"ive maximum likelihood estimate (MLE) $\mu_i$ (blue), and is meaningful even when $\mu_i \notin [0, 1]$.
    \textbf{Right:} We plug in our diagonal estimates into the formula $\tilde{J}_{ii}/(1 - \tilde{J}_{ii})$ for a ridge regression problem with $n = p = 100$, $\lambda = 0.1$, and $\vx_i = t_i \vz_i$ for $t_i \sim \mathrm{Uniform}[\tfrac{1}{2}, 1]$ and $\vz_i \sim \normal(\vzero, \mI)$. Direct application of $\mu_i$ for $m = 50$ provides poor estimates when $\tilde{J}_{ii}$ is close to 1 (blue  circles), and sometimes yields nonsense results when $\mu_i > 1$ (red diamonds) which are poorly addressed by clipping. Meanwhile, the truncated normal MMSE strategy (orange $\times$'s) controls the effect of noise on inversion.
    }
    \label{fig:truncnorm-viz}
\end{figure}

\subsection{Dealing with noise: Risk inflation debiasing}

Since the variances of $\mu_i$ decay only at a rate of $1 / m$, the effects of estimation noise will appear in our risk estimate for any finite $m$. Unfortunately, as demonstrated in \Cref{fig:truncnorm-viz} (right), the noise can be substantial even for moderate numbers of Jacobian--vector products such as $m = 50$. The effect of this noise is typically a bias towards an inflated estimate of risk, larger than the true risk. To see why this is generally the case, consider a sufficiently large $m$ such that Gaussianity is preserved through the mapping $\mu \mapsto \mu / (1 - \mu)$ and thus for some $s_i > 0$,
\begin{align}
    \frac{\ell'(y_i, \hat{y}_i)}{\ell''(y_i, \hat{y}_i)}\frac{\mu_i}{1 - \mu_i} \sim \normal \paren[\Big]{\frac{\ell'(y_i, \hat{y}_i)}{\ell''(y_i, \hat{y}_i)}\frac{\tilde{J}_{ii}}{1 - \tilde{J}_{ii}}, \frac{s_i^2}{m}}.
\end{align}
Let $\bar{y}_i$ be the ALO-corrected prediction evaluated at $\mu_i$ instead of $\tilde{J}_{ii}$, such that $\bar{y}_i = \tilde{y}_i + s_i z_i/\sqrt{m}$ for $z_i \sim \normal(0, 1)$ independent of $y_i$, $\tilde{y}_i$, and $s_i$. Now consider any convex risk function $\phi$. By the law of large numbers and Jensen's inequality, we should expect to see bias for large $n$. Letting $Y$, $\widetilde{Y}$, and $S$ denote random variables matching the empirical distributions of $y_i$, $\tilde{y}_i$, and $s_i$ with an independent $Z \sim \normal(0, 1)$:
\begin{align}
    \frac{1}{n} \sum_{i=1}^n \phi(y_i, \bar{y}_i) \approx \E \bracket[\Big]{\phi \paren[\Big]{Y, \widetilde{Y} + \frac{SZ}{\sqrt{m}}}} \geq \expect{\phi(Y, \widetilde{Y})} \approx \frac{1}{n} \sum_{i=1}^n \phi(y_i, \tilde{y}_i).
\end{align}
The risk estimate may be inflated even if the risk function is not convex, such as for ``zero--one'' error $\phi(y, z) = \ind\set{yz < 0}$, since noisier predictions generally incur higher risk, and the estimation noise behaves like prediction noise.

Fortunately, however, risk functions tend to be well-behaved when it comes to noise in a way that we can exploit to eliminate risk inflation. If the risk function is analytic in a neighborhood around $\widetilde{Y}$, then 
\begin{align}
    \E \bracket[\Big]{\phi \paren[\Big]{Y, \widetilde{Y} + \frac{SZ}{\sqrt{m}}} \,\Big|\, Y, \widetilde{Y}}
    = \E \bracket[\big]{\phi \paren{Y, \widetilde{Y}} \,\big|\, Y, \widetilde{Y}} +
    \frac{1}{2m} \E \bracket[\big]{S^2 \phi'' \paren{Y, \widetilde{Y}} \,\big|\, Y,\widetilde{Y}} + o\paren[\Big]{\frac{1}{m}},
\end{align}
where $o(t)/t \to 0$ as $t \to 0$, here at a rate that may depend on $Y$ and $\widetilde{Y}$. Provided these quantities are well behaved over $Y$ and $\widetilde{Y}$, we can take the expectation over these variables as well, obtaining the large-sample limit plug-in risk estimate as a function of $m$:
\begin{align}
    R(m) \defeq \E \bracket[\Big]{\phi \paren[\Big]{Y, \widetilde{Y} + \frac{SZ}{\sqrt{m}}}}
    = \underbrace{\E \bracket[\big]{\phi \paren{Y, \widetilde{Y}}}}_{\rdefeq R_0}
    {}+ \frac{1}{m} \underbrace{\E \bracket[\Big]{\frac{S^2}{2} \phi'' \paren{Y, \widetilde{Y}}}}_{\rdefeq R_0'} {}+ o\paren[\Big]{\frac{1}{m}}.
\end{align}
Note that $R_0$ is the true quantity to be estimated, and that for sufficiently large $m$ we have the approximate linear relation $R(m) \approx R_0 + R_0' / m$. Therefore, we can estimate $R_0$ by evaluating our plug-in estimate $R(m)$ at several choices of $m$ and extracting the intercept term using linear regression. To avoid the need to compute additional Jacobian--vector products, we propose to take subsamples of size $m' \leq m$ of the individual diagonal estimates $(\widetilde{\mJ} \vw_k) \odot \vw_k$ which have already been computed, such that obtaining the averaged BKS estimate for the subsample has negligible cost. Repeating this subsampling procedure for several sufficiently large values of $m'$ (e.g., $m' \in \set{m/2, \ldots, m}$), we obtain our complete procedure, which we call RandALO, described in \Cref{alg:alo-randomized} and demonstrated in \Cref{fig:lasso-bks-convergence}.

\begin{figure}[t]
    \centering
    \includegraphics[width=\textwidth]{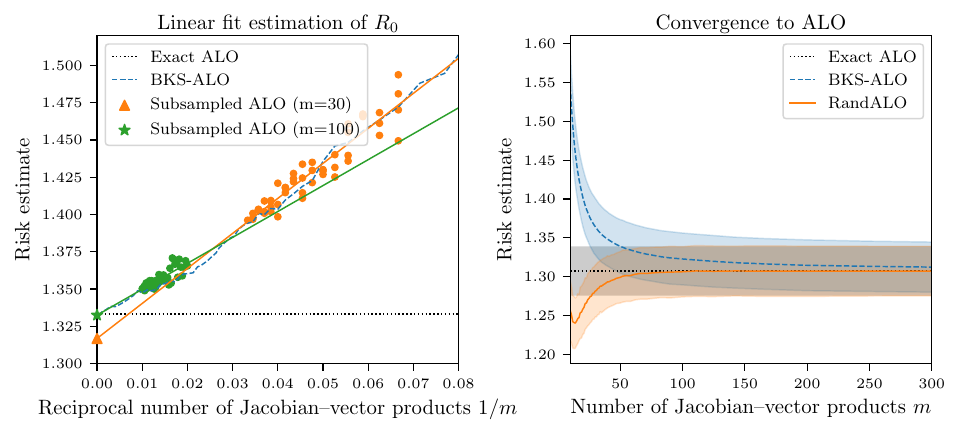}
    \caption{\textbf{Left:} Although the plug-in estimation of ALO using BKS diagonal estimation (blue, dashed) is significantly biased, by evaluating the plug-in BKS risk estimate with subsampled Jacobian--vector products (dots), we can obtain high quality debiased estimates (triangle, star) of ALO using a linear regression (solid lines).
    \textbf{Right:} Our complete procedure RandALO (orange, solid) converges very quickly in $m$ to the limiting ALO risk estimate, which provides an accurate estimate of test error (black, dotted). It converges significantly more quickly than the na\"ive plug-in BKS estimate (blue, dashed). Lines and shaded areas denote mean and standard deviation over 100 random trials of a lasso problem with $n = p = 5000$ described in \Cref{sec:num:lasso}}.
    \label{fig:lasso-bks-convergence}
\end{figure}

\begin{algorithm}[H]
\caption{Perform randomized ALO risk estimation}
\label{alg:alo-randomized}
\begin{algorithmic}[1] 
\Procedure{RandALO}{$\vy \in \reals^n, \widehat{\vy} \in \reals^n$, $\mJ = \frac{\partial \widehat{\vy}}{\partial \vy} \in \reals^{n \times n}$, $\ell \colon \reals^2 \to \reals$, $\phi \colon \reals^2 \to \reals$, $m \in \nats$}
    \State $\tilde{J}_{ij} \gets - J_{ij} \cdot \frac{\ell''(y_j, \hat{y}_j)}{\frac{\partial \ell'(y_j, \hat{y}_j)}{\partial y_j}}$.
    \Comment{Normalize Jacobian.}
    \State Sample $\mW \in \reals^{n \times m}$ as $W_{ij} \overset{\iid}\sim \mathrm{Rademacher}$.
    \State $\mD \gets (\widetilde{\mJ} \mW) \odot \mW \in \reals^{n \times m}$. 
    \Comment{Randomized diagonal estimation.}
    \State $\mu_i \gets \frac{1}{m} \sum_{j=1}^m D_{ij}$.
    \State $\sigma_i^2 \gets \frac{1}{m - 1} \sum_{j=1}^m (D_{ij} - \mu_i)^2$. 
    \Comment{Compute statistics using all samples.}
    
    \For{$m' \in \set{\frac{m}{2}, \ldots, m}$}
    \Comment{Iterate over subsets of different sizes.}
    \State Sample random subset $\setM \subseteq [m]$ of size $m'$.
    \State $\hat{\mu}_i \gets \frac{1}{m'} \sum_{j \in \setM} D_{ij}$.
    \Comment{Compute subset statistics.}
    \State $d_i \gets \textsc{TruncatedNormalMean}(\hat{\mu}_i, \frac{\sigma_i}{\sqrt{m'}}, 0, 1)$
    \Comment{Correct noise outside of $[0, 1]$ bounds.}
    \State $\tilde{y}_i \gets \hat{y}_i + \frac{\ell'(y_i, \hat{y}_i)}{\ell''(y_i, \hat{y}_i)} \frac{d_i}{1 - d_i}$.
    \Comment{Estimate LOO prediction.}
    \State $\hat{R}(m') \gets \frac{1}{n} \sum_{i=1}^n \phi(y_i, \tilde{y}_i)$.
    \Comment{Compute risk estimate for subset.}
    \EndFor

    \State $\hat{R}_0, \hat{R}_0' \gets$ Regress $\hat{R}(m') \sim R_0 + \frac{R_0'}{m'}$.
    \Comment{Extrapolate limiting risk.}
    \State \textbf{return} $\hat{R}_0$.
\EndProcedure
\end{algorithmic}
\end{algorithm}

\section{Computing Jacobian--vector products}\label{sec:jvp}

We have thus far considered the computation of Jacobian--vector products as a black box operation available to \Cref{alg:alo-randomized}. In some cases this may be preferred, such as when the Jacobian can be computed via automatic differentiation, as recently exploited in a similar setting for computing Stein's unbiased risk estimate \citep{nobel2023tractable}.
However, for linear models as we focus on in this work, it is important that the Jacobian--vector products have similar computational complexity to the efficient iterative algorithms that practitioners generally use to obtain their predictive models.

We can determine the Jacobian $\partial \widehat{\vy}/\partial \vy$ using implicit differentiation of \cref{eq:erm} to obtain the expression in \cref{eq:alo} when the regularizer is twice differentiable, but in the general case of non-differentiable regularizers, subtleties arise that need to be handled carefully.
We consider a simple yet powerful case, where $r$ can be written as a sum of functions  with at most one point of non-differentiablity precomposed with affine transformations.
Computing Jacobian--vector products then reduces to solving an equality-constrained quadratic program, as stated in the following theorem.

\begin{theorem}
\label{thm:derivative-as-qp}
Let $\widehat{\vy} = \mX \widehat{\vbeta}$ for $\widehat{\vbeta}$ solving \cref{eq:erm}, let $\ell$ be twice-differentiable and strictly convex in its second argument, and 
let the regularizer have the form $r(\vbeta) = \sum_{k=0}^K r_k(\mA_k \vbeta + \vc_k)$ for some $\mA_k \in \reals^{q_k \times p}$, $\vc_k \in \reals^{q_k}$, and convex $r_k: \reals^{q_k} \to \reals$ such that
\begin{itemize}
\item $r_0$ is twice-differentiable everywhere,
\item for all $k \in \{1, 2, \ldots, K\}$, $r_k$ is twice-differentiable everywhere except $\vzero$, 
and
\item $\setS = \{0\} \cup \{k\in \{1, 2, \ldots, K\}: \mA_k\widehat\vbeta + \vc_k \neq \vzero\}$ is constant in a neighborhood of $\vy$.
\end{itemize}
Then for any vector $\vz \in \reals^n$, if there is a unique solution $\vv^* \in \reals^p$ to the quadratic program 
\begin{equation}
\begin{array}{ll}
\mathrm{minimize}_\vv & \frac{1}{2} \vv^\top \left(\mX^\top \mH_\ell \mX + \sum_{k\in\setS} \mA_k^\top \nabla^2 r_k(\mA_k \widehat{\vbeta} + \vc_k) \mA_k \right) \vv - \vv^\top \mX^\top \mH_\ell \vz \\
\mathrm{subject~to} & \mA_k \vv = \vzero, \qquad k \notin \setS,
\end{array}
\label{eq:jvp-qp}
\end{equation}
then $\widetilde{\mJ} \vz = \mX \vv^*$.
\begin{proof}
First we show that the uniqueness of $\vv^\star$ implies that a particular matrix is invertible.
Note that the optimality conditions of \cref{eq:jvp-qp} are equivalent to 
\begin{equation}\label{eq:kkt-jvp-qp}
\begin{array}{l}
\left(\mX^\top \mH_\ell\mX+ \sum_{k\in\setS} \mA_k^\top \nabla^2 r_k(\mA_k \widehat{\vbeta} + \vc_k) \mA_k \right) \vv + \mN^\top \vnu =  \mX^\top \mH_\ell \vz \\
\mN \vv = \vzero,
\end{array}
\end{equation}
where $\mN$ is a full-rank matrix whose null space equals the intersection of the null spaces of $\mA_k$ for $k \notin\setS$.
For convenience, let $\mP = \mX^\top \mH_\ell\mX+ \sum_{k\in\setS} \mA_k^\top \nabla^2 r_k(\mA_k \widehat{\vbeta} + \vc_k) \mA_k$.
Rewritten more compactly, \cref{eq:kkt-jvp-qp} is the system of linear equations
\begin{equation}\label{eq:linear-system-kkt-qp}
\begin{bmatrix}
\mP & \mN^\top \\
\mN & \vzero
\end{bmatrix}
\begin{bmatrix} \vv \\ \vnu \end{bmatrix}
= \begin{bmatrix}
\mX^\top \mH_\ell\vz \\ \vzero
\end{bmatrix}.
\end{equation}

Seeking a contradiction, assume that there exists $\vu \neq \vzero$ such that $\mN \vu = \vzero$  and $\vu^\top \mP \vu = \vzero$.
Then $\vv^\star + \vu$ solves \cref{eq:linear-system-kkt-qp}.
This would imply that $\vv^\star$ is not the unique solution to \cref{eq:jvp-qp}, contradicting our assumption that $\vv^*$ is unique.
Therefore the matrix on the LHS is invertible~\cite[\S10.1.1]{bv2004convex}.


We now turn to our main focus: applying the Implicit Function Theorem.
By rewriting \cref{eq:erm} with new constraints $\vd_k = \mA_k \vb + \vc_k$ for $k \notin \setS$, we obtain
\begin{equation}\label{eq:erm-r-as-sum}
\widehat\vbeta = \argmin_{\vb: k \notin \setS: \mA_k \vb + \vc_k = \vd_k} \sum_{i=1}^n \ell(y_i, \vx_i^\top \vb) + 
\sum_{k\in \setS} r_k(\mA_k \vb + \vc_k)+
\sum_{k\notin\setS} r_k(\vd_k).
\end{equation}
For convenience, let $L(\vy, \mX\vb) = \sum_{i=1}^n \ell(y_i, \vx_i^\top \vb)$ and $\nabla L(\vy, \mX\vb)$ be the gradient of $L$ with respect to its second argument.
In the neighborhood of $\vy$ for which $\setS$ is constant, this mapping from $\vy$ to $\widehat\vbeta$ is equal to the mapping from $\vy$ to $\widetilde\vbeta$ given by 
\begin{equation}\label{eq:erm-r-as-sum-over-S}
\widetilde\vbeta = \argmin_{\vb: (k \notin \setS: \mA_k \vb + \vc_k = \vzero)} \sum_{i=1}^n \ell(y_i, \vx_i^\top \vb) + 
\sum_{k\in \setS} r_k(\mA_k \vb + \vc_k)+
 \sum_{k\notin\setS} r_k(\vzero).
\end{equation}
The constraints $k\notin\setS: \mA_k\vb + \vc_k = \vzero$ form an affine set parallel to the intersection of the null spaces of $\mA_k$ for all $k \notin \setS$.
Accordingly, we can write the optimality conditions of \cref{eq:erm-r-as-sum-over-S} as
\begin{equation}\label{eq:kkt}
\begin{array}{ll}
    \mX^\top \nabla L(\vy, \mX\vb)
    + \sum_{k\in \setS} \mA_k^\top \nabla r_k(\mA_k\vb + \vc_k)
    + \mN^\top \vlambda = \vzero, \\
     \mN \vb - \mN\widehat\vbeta = \vzero,
\end{array}
\end{equation} 
where $\mN$ is as defined above.
In order to apply the Implicit Function Theorem \cite[Theorem 9.28]{rudin76} to \cref{eq:kkt},
we note that there exists an open set $\setE$ around $\widehat\vbeta$, such that for all $\vb \in \setE$, $\mA_k\vb + \vc_k \neq \vzero$ for all $k \in \setS$.
Therefore, the left-hand side is continuously differentiable on $\setE$.
Further, the Jacobian of the left-hand sides of \cref{eq:kkt} with respect to $\vb, \vlambda$ is given by
\begin{equation}\label{eq:jacobian-for-ift}
\mM = \begin{bmatrix}
\mP  & \mN^\top \\
\mN & \vzero \\
\end{bmatrix}.
\end{equation}
This matrix is the same as the left-hand side of \cref{eq:linear-system-kkt-qp}, which we showed to be invertible above. 

Accordingly, the Implicit Function Theorem guarantees that $\widetilde\vbeta$ and $\vlambda$ are differentiable functions of $\vy$ whose derivatives are given by 
\begin{equation}\label{eq:ift}
\begin{bmatrix}
\frac{\partial \widetilde\vbeta}{\partial \vy} \\
\frac{\partial \vlambda}{\partial \vy} \\
\end{bmatrix}
=
-\mM^{-1} \begin{bmatrix}
\mX^\top \frac{\partial^2 L}{\partial \mX\vb \partial \vy}\\
\vzero \\
\end{bmatrix}.
\end{equation}
Since $\widetilde\vbeta = \widehat\vbeta$ on the neighborhood where $\setS$ is constant, we can conclude that
$  \frac{\partial \widetilde\vbeta}{\partial \vy} = \frac{\partial \widehat\vbeta}{\partial \vy}$.
Next, recall that $\widetilde\mJ\vz = -\mJ\left(\frac{\partial^2 L}{\partial\mX\vb \partial\vy}\right)^{-1} \mH_\ell\vz = \mX\frac{\partial\widehat\vbeta}{\partial \vy} \Big(-\left(\frac{\partial^2 L}{\partial\mX\vb \partial\vy}\right)^{-1} \Big) \mH_\ell\vz  = \mX \vv^\star$.
Basic algebra applied to \cref{eq:ift} shows that 
\begin{equation}\label{eq:ift-rearranged}
\mM\begin{bmatrix}
\vv^\star \\
\vnu^\star \\
\end{bmatrix}
=
\begin{bmatrix}
\mX^\top \mH_\ell \vz\\
\vzero \\
\end{bmatrix}
\end{equation}
which is simply \cref{eq:linear-system-kkt-qp}.
\end{proof}
\end{theorem}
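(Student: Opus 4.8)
The plan is to realize $\widetilde{\mJ}\vz$ as a directional derivative of the fitted model with respect to $\vy$ and then identify that derivative with the solution of the quadratic program \cref{eq:jvp-qp} via the Implicit Function Theorem. The first step is to reduce the non-smooth problem \cref{eq:erm} to a smooth equality-constrained convex program on a neighborhood of $\vy$. Because $\setS$ is locally constant, for $\vy'$ near $\vy$ the fitted $\widehat\vbeta(\vy')$ satisfies $\mA_k\widehat\vbeta(\vy') + \vc_k = \vzero$ exactly for every $k \notin \setS$, while $\mA_k\widehat\vbeta(\vy') + \vc_k$ stays bounded away from $\vzero$ for $k \in \setS\setminus\{0\}$. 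I would then argue that minimizing the original objective is equivalent, near $\vy$, to minimizing $\sum_i \ell(y_i', \vx_i^\top\vb) + \sum_{k\in\setS} r_k(\mA_k\vb + \vc_k)$ subject to the linear constraints $\mA_k\vb + \vc_k = \vzero$ for $k\notin\setS$: on the feasible set the two objectives differ only by the additive constant $\sum_{k\notin\setS} r_k(\vzero)$, and the unconstrained optimum is itself feasible. Encoding the constraints through a single full-rank matrix $\mN$ with $\ker\mN = \bigcap_{k\notin\setS}\ker\mA_k$, and noting that on a small open set $\setE$ around $\widehat\vbeta$ every $r_k$ with $k\in\setS$ is twice differentiable, this surrogate is a smooth convex program whose KKT conditions (Lagrangian stationarity plus the affine constraint $\mN\vb = \mN\widehat\vbeta$) characterize the fitted coefficient.

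Next I would check that the KKT map is amenable to the Implicit Function Theorem. Its partial Jacobian in the primal--dual variables is the saddle-point matrix $\mM = \begin{bmatrix}\mP & \mN^\top\\ \mN & \vzero\end{bmatrix}$ with $\mP = \mX^\top\mH_\ell\mX + \sum_{k\in\setS}\mA_k^\top\nabla^2 r_k(\mA_k\widehat\vbeta + \vc_k)\mA_k$, and the key subclaim is that $\mM$ is invertible. This is exactly where the hypothesis that $\vv^\star$ is the \emph{unique} solution of \cref{eq:jvp-qp} enters. If $\mM$ were singular there would be a nonzero $(\vu,\vw)$ in its kernel; the block structure forces $\mN\vu = \vzero$ and $\mP\vu = -\mN^\top\vw$, and left-multiplying the latter by $\vu^\top$ gives $\vu^\top\mP\vu = -(\mN\vu)^\top\vw = 0$. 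Since $\ell$ is strictly convex ($\mH_\ell \succ 0$) and each $r_k$ is convex, $\mP\succeq 0$, so $\mP\vu = \vzero$; then $\mN^\top\vw = \vzero$, full row rank of $\mN$ gives $\vw = \vzero$, and hence $\vu\neq\vzero$ with $\mN\vu = \vzero$ and $\vu^\top\mP\vu = 0$. But then $\vv^\star + \vu$ also satisfies the KKT system of the convex QP \cref{eq:jvp-qp} and is a second minimizer, a contradiction. The same computation shows $\mP$ is positive definite on $\ker\mN$; since $\mP$ is precisely the Hessian of the smooth surrogate objective at $\widehat\vbeta$, this makes that program strictly convex near $\widehat\vbeta$ and so upgrades the reduction above from an identity of objective values to an identity of minimizers throughout the neighborhood.

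With $\mM$ invertible, the Implicit Function Theorem produces a $C^1$ solution map $\vy'\mapsto(\widehat\vbeta(\vy'),\vlambda(\vy'))$ near $\vy$ whose derivative equals $-\mM^{-1}$ applied to the derivative of the KKT residual in $\vy$, namely $\begin{bmatrix}\mX^\top \mathrm{diag}(\partial\ell'(y_i,\hat y_i)/\partial y_i)\\ \vzero\end{bmatrix}$ where the diagonal block comes from differentiating $\vx_i^\top\nabla L$ entrywise. The remaining step is bookkeeping against the definition of the normalized Jacobian: from \cref{eq:alo}, $\widetilde\mJ\vz = -\mJ\,\mathrm{diag}(\partial\ell'(y_i,\hat y_i)/\partial y_i)^{-1}\mH_\ell\vz$, and $\mJ = \mX\,\partial\widehat\vbeta/\partial\vy$, so substituting the top block of the IFT formula and cancelling the diagonal factor yields $\widetilde\mJ\vz = \mX\vv^\star$, where the stacked vector $(\vv^\star,\vnu^\star)$ solves $\mM(\vv^\star,\vnu^\star)^\top = (\mX^\top\mH_\ell\vz,\vzero)^\top$; this is exactly the KKT system of \cref{eq:jvp-qp}, so by convexity and the assumed uniqueness $\vv^\star$ is its solution.

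I expect the main obstacle to be the reduction step: rigorously justifying that the fitted coefficient of the original non-smooth ERM, viewed as a function of $\vy$ on an entire neighborhood rather than only at the base point, coincides with that of the smooth equality-constrained surrogate. This needs the locally-constant-$\setS$ hypothesis to pin down which kinks are active, the open neighborhood $\setE$ on which the active $r_k$ are twice differentiable, and strict convexity of the surrogate (positive definiteness of $\mP$ on $\ker\mN$) to convert equal objective values into equal minimizers. Everything downstream---invertibility of the saddle-point matrix and the IFT differentiation---is comparatively routine linear algebra, modulo the mild point that $\partial\ell'(y_i,\hat y_i)/\partial y_i$ must be nonzero for the normalization and the final cancellation to be valid, which holds for the loss families of practical interest.
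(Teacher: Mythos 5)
Your proposal is correct and follows essentially the same route as the paper's proof: reduce the non-smooth ERM to a smooth equality-constrained surrogate on the neighborhood where $\setS$ is constant, deduce invertibility of the saddle-point matrix $\mM$ from the uniqueness of $\vv^\star$, apply the Implicit Function Theorem, and match the resulting linear system to the KKT conditions of \cref{eq:jvp-qp}. Your invertibility argument is in fact slightly more complete than the paper's (you derive the existence of $\vu \neq \vzero$ with $\mN\vu = \vzero$ and $\vu^\top \mP \vu = 0$ from singularity of $\mM$ using $\mP \succeq 0$, rather than positing it), but the substance is identical.
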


The conditions of this theorem are somewhat technical, but they allow us to generalize to many common regularizers of interest.
The essence of the result is that when a solution is found at a non-differentiable point of a regularizer, the Jacobian has no component in the corresponding direction.
We now give several examples of how to apply this theorem for a few popular regularizers.

\begin{corollary} 
\label{cor:lasso-elastic-net}
For the elastic net penalty $r(\vbeta) = \frac{\lambda}{2} \norm[2]{\vbeta}^2 + \theta \norm[1]{\vbeta}$, the Jacobian has the form
\begin{align}
    \widetilde{\mJ} = \mX_\setS \inv{\mX_\setS^\transp \mH_\ell \mX_\setS + \lambda \mI} \mX_\setS^\transp \mH_\ell,
    \label{eq:en-penality-jacobian}
\end{align}
where $\mX_\setS$ selects only the columns of $\mX$ from the set $\setS = \set{j \colon \hat{\beta}_j \neq 0}$, when $\setS$ is locally constant and either $\lambda >0$ or $\mX_\setS^\top \mH_\ell\mX_\setS$ is invertible.
\begin{proof}
Apply \cref{thm:derivative-as-qp} with $r_0(\vbeta) = \frac{\lambda}{2} \norm[2]{\vbeta}^2$ and $r_k(\vbeta) = \theta |\ve_k^\top \vbeta|$ for $k \in [p]$, where $\ve_k \in \reals^p$ are the standard basis vectors, to obtain that $\widetilde\mJ \vz = \mX \vv^\star$ such that $\vv^\star$ satisfies
\begin{equation}
\begin{array}{ll}\label{eq:qp-for-en-jvp}
\text{minimize} & \frac{1}{2} \vv^\top \left(\mX^\top \mH_\ell\mX + \lambda \mI\right)\vv - \vv^\top \mX^\top \mH_\ell \vz \\
\text{subject to} & \ve_k^\top \vv = 0 \; : \; k \notin\setS.
\end{array}
\end{equation}
The constraints ensures that the columns of $\mX$ associated with $k \notin\setS$ are multiplied by $0$, allowing the $\vv_\setS^\star$ to be expressed as the minimization of the following quadratic: $\frac{1}{2} \vv_\setS^\top\left(\mX_\setS^\top \mH_\ell \mX_\setS + \lambda \mI\right)\vv_\setS - \vv_\setS^\top \mX_\setS^\top \mH_\ell \vz$.
This can be solved analytically to show that $\widetilde\mJ\vz = \mX_\setS \inv{\mX_\setS^\transp \mH_\ell \mX_\setS + \lambda \mI} \mX_\setS^\transp \mH_\ell \vz $.
Because either $\mX_\setS^\transp \mH_\ell \mX_\setS$ is invertible or $\lambda > 0$, such that $\mX_\setS^\transp \mH_\ell \mX_\setS + \lambda \mI$ is invertible, $\vv^*$ is unique for any $\vz$ as required by the theorem.
By considering $\vz = \ve_i$ for $i \in [n]$, we obtain \cref{eq:en-penality-jacobian}. 
%
%
\end{proof}
\end{corollary}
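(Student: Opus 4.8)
The plan is to read off \Cref{cor:lasso-elastic-net} as a direct instance of \Cref{thm:derivative-as-qp}, so almost all of the work is in choosing the decomposition of the elastic net penalty into affinely-precomposed pieces and then simplifying the resulting quadratic program. First I would set $r_0(\vbeta) = \tfrac{\lambda}{2}\norm[2]{\vbeta}^2$ (with $\mA_0 = \mI$, $\vc_0 = \vzero$), which is twice differentiable everywhere with constant Hessian $\lambda\mI$, and for each coordinate $k\in\{1,\dots,p\}$ set $r_k(\vbeta) = \theta\abs{\ve_k^\transp\vbeta}$ (with $\mA_k = \ve_k^\transp$, $\vc_k = \vzero$). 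I would then check the three hypotheses of the theorem: $r_0$ is smooth; each $r_k$ is twice differentiable away from its single bad point $\ve_k^\transp\vbeta = 0$; and the theorem's active set $\set{0}\cup\set{k:\hat\beta_k\neq 0}$ is locally constant precisely because the corollary assumes $\setS = \set{j:\hat\beta_j\neq 0}$ is, the index $0$ being always active.

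Next I would invoke \Cref{thm:derivative-as-qp} to write $\widetilde\mJ\vz = \mX\vv^\star$ where $\vv^\star$ minimizes $\tfrac12\vv^\transp\mP\vv - \vv^\transp\mX^\transp\mH_\ell\vz$ over $\set{\vv : \ve_k^\transp\vv = 0,\ k\notin\setS}$, and simplify $\mP$. The crucial observation is that $t\mapsto\theta\abs{t}$ has second derivative zero wherever it is differentiable, so every $k\geq 1$ term in $\mP = \mX^\transp\mH_\ell\mX + \sum_{k\in\setS}\mA_k^\transp\nabla^2 r_k\,\mA_k$ vanishes and only the $k=0$ term survives, leaving $\mP = \mX^\transp\mH_\ell\mX + \lambda\mI$. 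I would then use the equality constraints to restrict $\vv$ to the coordinates in $\setS$: writing $\vv_\setS$ for that subvector, $\mX\vv = \mX_\setS\vv_\setS$ and $\vv^\transp\mP\vv = \vv_\setS^\transp(\mX_\setS^\transp\mH_\ell\mX_\setS + \lambda\mI)\vv_\setS$, so the QP collapses to the unconstrained quadratic minimization of $\tfrac12\vv_\setS^\transp(\mX_\setS^\transp\mH_\ell\mX_\setS + \lambda\mI)\vv_\setS - \vv_\setS^\transp\mX_\setS^\transp\mH_\ell\vz$, solved in closed form by $\vv_\setS^\star = \inv{\mX_\setS^\transp\mH_\ell\mX_\setS + \lambda\mI}\mX_\setS^\transp\mH_\ell\vz$. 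Finally I would note that this minimizer is unique (as the theorem requires) as soon as $\mX_\setS^\transp\mH_\ell\mX_\setS + \lambda\mI$ is invertible --- automatic when $\lambda > 0$ since $\mH_\ell\succeq 0$ forces $\mX_\setS^\transp\mH_\ell\mX_\setS\succeq 0$, and exactly the stated extra hypothesis when $\lambda = 0$ --- and that taking $\vz = \ve_i$ over $i\in[n]$ recovers all columns of the Jacobian, yielding \cref{eq:en-penality-jacobian}.

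The only step I expect to require care is verifying the interface with \Cref{thm:derivative-as-qp}: confirming that the absolute-value pieces really fit the ``one non-differentiable point, twice differentiable elsewhere, and with vanishing Hessian there'' template (this vanishing is what kills the $\ell_1$ contribution to $\mP$), and that the theorem's active set coincides with $\mathrm{supp}(\widehat{\vbeta})$. The one genuinely mathematical ingredient is the uniqueness-of-$\vv^\star$ requirement, which is the reason the corollary must hypothesize ``$\lambda > 0$ or $\mX_\setS^\transp\mH_\ell\mX_\setS$ invertible''; beyond that everything is routine linear algebra.
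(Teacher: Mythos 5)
Your proposal is correct and follows essentially the same route as the paper: the same decomposition $r_0(\vbeta) = \tfrac{\lambda}{2}\norm[2]{\vbeta}^2$, $r_k(\vbeta) = \theta\abs{\ve_k^\transp\vbeta}$, the same reduction of the QP to the coordinates in $\setS$, the same closed-form solve, and the same uniqueness argument via invertibility of $\mX_\setS^\transp\mH_\ell\mX_\setS + \lambda\mI$. Your explicit remark that the $\ell_1$ pieces contribute nothing to $\mP$ because $\abs{\cdot}$ has vanishing second derivative away from the origin is a useful clarification that the paper leaves implicit, but it does not change the argument.
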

The above corollary of course recovers the standard ridge regression ($\theta = 0$) and lasso ($\lambda = 0$) penalty Jacobians as special cases and matches the extension of ALO to the elastic net by \citet{auddy2024approximate}. Other separable penalties such as the $\ell_p^p$ norm admit a similar form. However, we are not restricted to separable penalties. For example, we can also pre-transform $\vbeta$ before applying a separable penalty.

\begin{corollary}
\label{cor:lin-lasso}
For the linearly transformed $\ell_1$ penalty $r(\vbeta) = \lambda \norm[1]{\mA^\transp \vbeta}$, the Jacobian--vector product $\widetilde{\mJ} \vz = \mX\vv^*$ where $\vv^*$ is the unique optimal solution to 
\begin{equation}
\label{eq:lin-lasso-qp}
\begin{array}{ll}
\textrm{minimize}_\vv & \frac{1}{2} \vv^\top \mX^\top \mH_\ell \mX  \vv - \vv^\top \mX^\top \mH_\ell \vz \\
\textrm{subject to} & \mA_\setSbar^\transp \vv = \vzero,
\end{array}
\end{equation}
where $\mA_\setSbar$ selects only the columns $\va_j$ of $\mA$ from the set $\setSbar = \set{j \colon  \va_j^\transp \widehat{\vbeta} = 0}$ when it is locally constant.
\begin{proof}
Apply \cref{thm:derivative-as-qp} with $r_0(\vbeta) = 0$ and $r_k(\vbeta) = \lambda |\va_k^\transp \vbeta|$ for $k \in [p]$.
\end{proof}
\end{corollary}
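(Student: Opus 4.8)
The plan is to invoke \Cref{thm:derivative-as-qp} essentially verbatim, so the only real work is to check that $r(\vbeta) = \lambda\norm[1]{\mA^\transp\vbeta}$ fits its template and that the resulting quadratic program collapses to \cref{eq:lin-lasso-qp}. First I would write $r(\vbeta) = \sum_{k=1}^p \lambda\,|\va_k^\transp\vbeta|$ where $\va_k$ is the $k$-th column of $\mA$, and match this against $r(\vbeta) = \sum_{k=0}^K r_k(\mA_k\vbeta + \vc_k)$ by taking $K = p$, $r_0 \equiv 0$, and, for $k \ge 1$, $\mA_k = \va_k^\transp \in \reals^{1\times p}$, $\vc_k = 0$, and $r_k(t) = \lambda|t|$ for $t \in \reals$. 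Then $r_0$ is twice differentiable everywhere, each $r_k$ ($k\ge1$) is twice differentiable on $\reals\setminus\{0\}$, and the active set $\setS = \{0\}\cup\{k : \va_k^\transp\widehat{\vbeta}\neq 0\}$ is precisely the complement within $[p]$ of $\setSbar = \{j : \va_j^\transp\widehat{\vbeta}=0\}$, so ``$\setSbar$ locally constant'' is exactly the hypothesis of \Cref{thm:derivative-as-qp} that $\setS$ is constant near $\vy$.

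Next I would substitute into \cref{eq:jvp-qp}. The regularizer contributes Hessian blocks $\mA_k^\transp \nabla^2 r_k(\mA_k\widehat{\vbeta}+\vc_k)\mA_k = \va_k\big(\lambda|\cdot|\big)''(\va_k^\transp\widehat{\vbeta})\,\va_k^\transp$ for $k \in \setS$; but by definition $\va_k^\transp\widehat{\vbeta}\neq 0$ there, and $t\mapsto\lambda|t|$ is affine in a neighborhood of any nonzero point, so every such block vanishes, and $r_0\equiv 0$ contributes nothing. Hence the quadratic term reduces to $\mX^\transp\mH_\ell\mX$. The constraints $\mA_k\vv = \vzero$ for $k\notin\setS$ are $\va_k^\transp\vv = 0$ for $k\in\setSbar$, which stack into $\mA_\setSbar^\transp\vv = \vzero$. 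This is exactly the program \cref{eq:lin-lasso-qp}, and \Cref{thm:derivative-as-qp} then yields $\widetilde\mJ\vz = \mX\vv^\star$ for every $\vz$ at which the minimizer $\vv^\star$ is unique.

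The one delicate point---and what I expect to be the main obstacle---is this uniqueness requirement, which the corollary carries as a standing assumption rather than deriving. Making it explicit: by the argument inside \Cref{thm:derivative-as-qp}, uniqueness of $\vv^\star$ is equivalent to $\mX^\transp\mH_\ell\mX$ being positive definite on the null space of $\mA_\setSbar^\transp$; since $\ell$ is strictly convex in its second argument we have $\mH_\ell\succ 0$, so this amounts to requiring that $\mX\vu = \vzero$ and $\mA_\setSbar^\transp\vu = \vzero$ together force $\vu = \vzero$. Unlike \Cref{cor:lasso-elastic-net}, where a positive ridge term or an invertible $\mX_\setS^\transp\mH_\ell\mX_\setS$ makes this automatic, here it genuinely depends on the joint geometry of $\mX$, $\mA$, and the active set $\setSbar$; I would simply note that this condition is precisely what ``the unique optimal solution'' means in the statement and not try to reduce it to the lower-level hypotheses.
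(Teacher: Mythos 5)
Your proposal is correct and follows exactly the paper's proof, which is the one-line instantiation of \Cref{thm:derivative-as-qp} with $r_0 \equiv 0$ and $r_k = \lambda|\va_k^\transp(\cdot)|$; you simply spell out the details (vanishing Hessian blocks off the kink, stacking the constraints into $\mA_\setSbar^\transp\vv = \vzero$) that the paper leaves implicit. Your remark that uniqueness of $\vv^\star$ is a genuine standing hypothesis here---not derivable from lower-level conditions as in \Cref{cor:lasso-elastic-net}---is accurate and consistent with how the corollary is stated.
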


The above penalty is commonly used in the compressed sensing literature, where $\mA$ transforms $\vbeta$ into a frame in which it should be sparse. Another non-separable example is the group lasso.

\begin{corollary}
\label{cor:group-lasso}
For the group lasso $r(\vbeta) = \sum_{k=1}^K \lambda \norm[2]{\mPi_k \vbeta}$ with disjoint idempotent subspace projection operators $\mPi_k \in \reals^{p \times p}$ such that $\mPi_k \mPi_{k'} = \vzero$ and $\sum_{k=1}^K \mPi_k = \mI$, the Jacobian has the form
\begin{align}
    \widetilde{\mJ} = \mX \mPi_\setS \Bigg(\mPi_\setS \mX^\transp \mH_\ell \mX \mPi_\setS + \sum_{k \in \setS} \frac{\lambda}{\norm[2]{\mPi_k \vbetahat}} \Bigg( \mPi_k - \frac{\mPi_k \vbetahat \vbetahat^\transp \mPi_k}{\norm[2]{\mPi_k \vbetahat}^2} \Bigg)  \Bigg)^\dagger \mPi_\setS \mX^\transp \mH_\ell,
\end{align}
where $\mPi_\setS = \sum_{k \in \setS} \mPi_k$ for $\setS = \set{k \colon \norm[2]{\mPi_k \vbetahat} \neq \vzero}$ when $\setS$ is locally constant. 
\begin{proof}
The Hessian of a single $\ell_2$ norm component $r_k(\vbeta) = \lambda \norm[2]{\mPi_k \vbeta}$ is given by
\begin{align}
    \nabla^2 r_k(\vbeta) = \frac{\lambda}{\norm[2]{\mPi_k \vbetahat}} \Bigg( \mPi_k - \frac{\mPi_k \vbetahat \vbetahat^\transp \mPi_k}{\norm[2]{\mPi_k \vbetahat}^2} \Bigg).
\end{align}
We can then apply \cref{thm:derivative-as-qp}. The linear constraint becomes $\mPi_k \vv = \vzero$ for all $k$ such that $\mPi_k \vbetahat = \vzero$, which is equivalent to restricting the system to the complementary subspace defined by $\mPi_\setS$.
\end{proof}
\end{corollary}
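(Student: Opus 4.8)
The plan is to obtain this corollary as a direct application of \Cref{thm:derivative-as-qp}, in the same spirit as \Cref{cor:lasso-elastic-net}. First I would put the group-lasso penalty into the form $r(\vbeta) = \sum_{k=0}^K r_k(\mA_k\vbeta + \vc_k)$ required by the theorem: take $r_0 \equiv 0$ (so $\mA_0,\vc_0$ play no role), and for each $k \in \{1,\dots,K\}$ let $\mA_k \in \reals^{q_k\times p}$ have orthonormal rows spanning the range of the (symmetric idempotent) projection $\mPi_k$, so that $\mA_k^\transp\mA_k = \mPi_k$ and $\mA_k\mA_k^\transp = \mI_{q_k}$, with $\vc_k = \vzero$ and $r_k(\vd) = \lambda\norm[2]{\vd}$. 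Then $r_k(\mA_k\vbeta) = \lambda\norm[2]{\mA_k\vbeta} = \lambda\sqrt{\vbeta^\transp\mPi_k\vbeta} = \lambda\norm[2]{\mPi_k\vbeta}$, recovering the penalty. The Euclidean norm on $\reals^{q_k}$ is twice differentiable except at $\vzero$, $r_0$ is twice differentiable everywhere, and $\mA_k\widehat\vbeta = \vzero$ iff $\mPi_k\widehat\vbeta = \vzero$, so the theorem's index set is $\{0\}$ together with the corollary's $\setS = \set{k : \norm[2]{\mPi_k\vbetahat} \neq 0}$; the spurious index $0$ contributes nothing since $\nabla^2 r_0 = \vzero$. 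Local constancy of $\setS$ is assumed in the hypothesis.

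Next I would evaluate the Hessian blocks entering \cref{eq:jvp-qp}. Using $\nabla^2(\lambda\norm[2]{\vd}) = \tfrac{\lambda}{\norm[2]{\vd}}\bigl(\mI - \vd\vd^\transp/\norm[2]{\vd}^2\bigr)$ and pulling this back through $\mA_k$, together with $\mA_k^\transp\mA_k = \mPi_k$ and $\norm[2]{\mA_k\widehat\vbeta} = \norm[2]{\mPi_k\vbetahat}$, gives $\mA_k^\transp\nabla^2 r_k(\mA_k\widehat\vbeta)\mA_k = \tfrac{\lambda}{\norm[2]{\mPi_k\vbetahat}}\bigl(\mPi_k - \mPi_k\vbetahat\vbetahat^\transp\mPi_k/\norm[2]{\mPi_k\vbetahat}^2\bigr)$, which is the displayed $\nabla^2 r_k$. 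So the quadratic term of the QP has matrix $\mX^\transp\mH_\ell\mX + \sum_{k\in\setS}\tfrac{\lambda}{\norm[2]{\mPi_k\vbetahat}}\bigl(\mPi_k - \mPi_k\vbetahat\vbetahat^\transp\mPi_k/\norm[2]{\mPi_k\vbetahat}^2\bigr)$.

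Then I would simplify the constraints and solve the reduced problem. For $k\notin\setS$ the equality constraint $\mA_k\vv = \vzero$ is equivalent to $\mPi_k\vv = \vzero$, and since the $\mPi_k$ are mutually orthogonal with $\sum_{k=1}^K\mPi_k = \mI$, the feasible set is exactly $\mathrm{range}(\mPi_\setS)$ for $\mPi_\setS = \sum_{k\in\setS}\mPi_k$. Restricting to this subspace and conjugating the quadratic term by $\mPi_\setS$ (disjointness gives $\mPi_\setS\mPi_k = \mPi_k$ for $k\in\setS$, so all cross terms vanish) reduces \cref{eq:jvp-qp} to: minimize $\tfrac12\vv^\transp\mQ\vv - \vv^\transp\mPi_\setS\mX^\transp\mH_\ell\vz$ over $\vv\in\mathrm{range}(\mPi_\setS)$, where $\mQ = \mPi_\setS\mX^\transp\mH_\ell\mX\mPi_\setS + \sum_{k\in\setS}\tfrac{\lambda}{\norm[2]{\mPi_k\vbetahat}}\bigl(\mPi_k - \mPi_k\vbetahat\vbetahat^\transp\mPi_k/\norm[2]{\mPi_k\vbetahat}^2\bigr)$ is precisely the matrix under the pseudoinverse in the statement. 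Its stationarity equation $\mQ\vv = \mPi_\setS\mX^\transp\mH_\ell\vz$ is consistent, since any $\vw \in \ker\mQ\cap\mathrm{range}(\mPi_\setS)$ satisfies $\mH_\ell^{1/2}\mX\vw = \vzero$, hence $\vw^\transp\mX^\transp\mH_\ell\vz = 0$, so the right-hand side lies in $\mathrm{range}(\mQ)$; thus $\vv^\star = \mQ^\dagger\mPi_\setS\mX^\transp\mH_\ell\vz \in \mathrm{range}(\mPi_\setS)$ and $\widetilde\mJ\vz = \mX\vv^\star = \mX\mPi_\setS\mQ^\dagger\mPi_\setS\mX^\transp\mH_\ell\vz$. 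Letting $\vz$ range over $\ve_1,\dots,\ve_n$ yields the claimed identity for $\widetilde\mJ$.

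The one genuinely delicate point — and the main obstacle — is reconciling the pseudoinverse in the statement with the uniqueness hypothesis of \Cref{thm:derivative-as-qp}: when $\mQ$ is singular on $\mathrm{range}(\mPi_\setS)$ the minimizer $\vv^\star$ is not unique, so the theorem does not literally apply. The clean fix is to add a positive-definiteness hypothesis, paralleling \Cref{cor:lasso-elastic-net}: if $\mX^\transp\mH_\ell\mX$ is positive definite on $\mathrm{range}(\mPi_\setS)$ — in particular whenever the ERM objective is strictly convex — then $\mQ$ is invertible there, $\mQ^\dagger$ is a genuine inverse, and the theorem applies verbatim. One can then remark that the pseudoinverse form remains valid in the rank-deficient case as well, since any two minimizers differ by a vector in $\ker\mQ\cap\mathrm{range}(\mPi_\setS)\subseteq\ker\mX$, so $\mX\vv^\star$ — and hence the Jacobian — is still well defined, and the implicit-function argument of \Cref{thm:derivative-as-qp}, once restricted to $\mathrm{range}(\mPi_\setS)$, carries through. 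Beyond this, the proof is routine linear algebra with orthogonal projections.
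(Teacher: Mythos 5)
Your proof is correct and follows the same route as the paper's: instantiate \cref{thm:derivative-as-qp}, compute the Hessian of the $\ell_2$-norm component, and observe that the constraints for inactive groups restrict the quadratic program to $\mathrm{range}(\mPi_\setS)$. The paper's version is only a few lines and leaves the instantiation implicit; yours is more careful in two places that are worth keeping. First, you decompose the penalty as $r_k(\mA_k\vbeta)$ with $\mA_k$ having orthonormal rows spanning $\mathrm{range}(\mPi_k)$ and $r_k = \lambda\norm[2]{\cdot}$ on $\reals^{q_k}$; this is the right way to meet the theorem's hypothesis that each $r_k$ be twice differentiable everywhere except $\vzero$, whereas the paper's implicit choice $r_k(\vbeta) = \lambda\norm[2]{\mPi_k\vbeta}$ is non-differentiable on the entire subspace $\ker\mPi_k$ and so does not literally satisfy the stated hypothesis. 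Second, you correctly flag the tension between the pseudoinverse in the corollary's statement and the uniqueness hypothesis of \cref{thm:derivative-as-qp}: the paper's proof does not address this, while \cref{cor:lasso-elastic-net} does add an explicit invertibility condition. Your proposed resolution---either assume $\mX^\transp\mH_\ell\mX$ is positive definite on $\mathrm{range}(\mPi_\setS)$, or note that any two minimizers differ by an element of $\ker\mQ\cap\mathrm{range}(\mPi_\setS)\subseteq\ker\mX$ so that $\mX\vv^\star$ is unambiguous---is sound, though the second option strictly requires revisiting the invertibility step inside the theorem's implicit-function argument rather than citing the theorem as a black box, as you acknowledge. The consistency argument for the normal equations (the right-hand side $\mPi_\setS\mX^\transp\mH_\ell\vz$ is orthogonal to $\ker\mQ\cap\mathrm{range}(\mPi_\setS)$, hence lies in $\mathrm{range}(\mQ)$) is a detail the paper omits entirely and is correct as you state it.
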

The standard group lasso where feature are partitioned into groups follows under the above corollary when $\mPi_k$ are diagonal matrices with $1$'s indicating feature membership in each group and $0$'s elsewhere. We give the non-overlapping group lasso here as it has a simple closed-form Jacobian, but overlapping groups can be easily accommodated in the quadratic program formulation.

Since \cref{thm:derivative-as-qp} describes the Jacobian as a quadratic program in $p$-dimensional space, it requires a known feature representation of the data. However, we can still consider linear models in unknown feature spaces for the ridge regularizer, enabling us to apply RandALO to kernel methods.

\begin{corollary}
\label{cor:kernel}
For the ridge penalty $r(\vbeta) = \frac{\lambda}{2} \norm[2]{\vbeta}^2$, the Jacobian admits a formulation in terms of the kernel matrix $\mK = \mX \mX^\transp$. If $\mK$ and $\mH_\ell$ are invertible, then
\begin{align}
    \widetilde{\mJ} = \mK \inv{\mK + \lambda \mH_\ell^{-1}}.
\end{align}
\begin{proof}
Without loss of generality, since $\mK$ is invertible, assume that $p = n$. Then starting from the ridge penalty solution in \Cref{cor:lasso-elastic-net} with $\theta = 0$ and $\setS = [n]$, we can introduce $\mX$ inside and outside the inverse to obtain
\begin{align}
    \widetilde{\mJ} = \mX \mX^\transp \inv{\mX \mX^\transp \mH_\ell \mX \mX^\transp + \lambda \mX \mX^\transp} \mX \mX^\transp \mH_\ell.
\end{align}
Bringing the $\mX \mX^\transp \mH_\ell$ on the right inside the inverse on the left as $\mH_\ell^{-1} \mK^{-1}$ cancels terms to produce the stated expression. 
\end{proof}
\end{corollary}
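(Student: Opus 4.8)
The plan is to reduce the claim to the closed form of the ridge Jacobian already obtained in \Cref{cor:lasso-elastic-net} with $\theta = 0$ (equivalently, \cref{eq:alo} with $\nabla^2 r(\widehat{\vbeta}) = \lambda\mI$), namely $\widetilde{\mJ} = \mX\inv{\mX^\transp \mH_\ell\mX + \lambda\mI}\mX^\transp \mH_\ell$, and then perform a short sequence of matrix-inverse manipulations to re-express it purely in terms of $\mK = \mX\mX^\transp$. Since $\mK$ and $\mH_\ell$ do not commute in general, the one thing to watch is that we may not simply ``cancel'' factors; instead we move factors of $\mX$ symmetrically through the inverse, exactly the ``introduce $\mX$ inside and outside the inverse'' step flagged in the statement.

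First I would eliminate the dependence on $p$. The push-through identity $\mX\inv{\mX^\transp \mH_\ell\mX + \lambda\mI} = \inv{\mK\mH_\ell + \lambda\mI}\mX$, which follows from $\paren{\mK\mH_\ell + \lambda\mI}\mX = \mX\paren{\mX^\transp \mH_\ell\mX + \lambda\mI}$, immediately gives $\widetilde{\mJ} = \inv{\mK\mH_\ell + \lambda\mI}\mK\mH_\ell$, with no reference to $p$. (Alternatively, since $\mK$ is invertible $\mX$ has full row rank $n \le p$, so one may replace $\mX$ by a square root $\mK^{1/2}$, reducing to the case $p = n$ with $\mX$ square and invertible, and then conjugating $\mX^\transp \mH_\ell\mX + \lambda\mI$ by $\mX^{-1}$ yields $\mX\inv{\mX^\transp \mH_\ell\mX + \lambda\mI}\mX^\transp = \inv{\mH_\ell + \lambda\mK^{-1}}$, hence $\widetilde{\mJ} = \inv{\mH_\ell + \lambda\mK^{-1}}\mH_\ell$, which one then massages to the same place.) Here I use that $\mX^\transp \mH_\ell\mX + \lambda\mI$ is invertible (guaranteed by the hypotheses of \Cref{cor:lasso-elastic-net}, e.g.\ when $\lambda>0$), together with the assumed invertibility of $\mK$ and $\mH_\ell$.

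Finally I would finish the algebra. Starting from $\widetilde{\mJ} = \inv{\mK\mH_\ell + \lambda\mI}\mK\mH_\ell$, write $\inv{\mK\mH_\ell + \lambda\mI}\mK\mH_\ell = \mI - \lambda\inv{\mK\mH_\ell + \lambda\mI}$ and factor $\mK\mH_\ell + \lambda\mI = \paren{\mK + \lambda\mH_\ell^{-1}}\mH_\ell$, so that $\lambda\inv{\mK\mH_\ell + \lambda\mI} = \lambda\mH_\ell^{-1}\inv{\mK + \lambda\mH_\ell^{-1}}$; then $\widetilde{\mJ} = \mI - \lambda\mH_\ell^{-1}\inv{\mK + \lambda\mH_\ell^{-1}} = \paren{\mK + \lambda\mH_\ell^{-1} - \lambda\mH_\ell^{-1}}\inv{\mK + \lambda\mH_\ell^{-1}} = \mK\inv{\mK + \lambda\mH_\ell^{-1}}$, which is the claimed formula. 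I do not expect a genuine obstacle: the ``hard part'' is merely bookkeeping — keeping the non-commuting factors $\mK$ and $\mH_\ell$ in the correct order throughout, and checking that each inverse that appears (of $\mX^\transp \mH_\ell\mX + \lambda\mI$, of $\mK$, of $\mH_\ell$, and of $\mK + \lambda\mH_\ell^{-1} = \paren{\mK\mH_\ell + \lambda\mI}\mH_\ell^{-1}$) is well defined under the stated hypotheses.
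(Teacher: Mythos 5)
Your proof is correct, and every step checks out: the push-through identity $\paren{\mK \mH_\ell + \lambda \mI}\mX = \mX\paren{\mX^\transp \mH_\ell \mX + \lambda \mI}$ gives $\widetilde{\mJ} = \inv{\mK \mH_\ell + \lambda \mI}\mK \mH_\ell$, and the factorization $\mK \mH_\ell + \lambda \mI = \paren{\mK + \lambda \mH_\ell^{-1}}\mH_\ell$ then yields $\mK\inv{\mK + \lambda \mH_\ell^{-1}}$. The paper starts from the same place (the ridge case of \Cref{cor:lasso-elastic-net}) but takes a different mechanical route: it uses the invertibility of $\mK$ to assume without loss of generality that $p = n$, so that $\mX$ is square and invertible, and then inserts $\mX\mX^\transp$ inside and outside the inverse and cancels. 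Your push-through version avoids the dimension reduction entirely and is in fact slightly more general: for $\lambda > 0$ it never actually uses the invertibility of $\mK$, since $\mK + \lambda \mH_\ell^{-1}$ is positive definite (hence invertible) whenever $\mK$ is merely positive semidefinite and $\mH_\ell$ is positive definite, so every inverse you take exists under weaker hypotheses than those stated. Both arguments are a few lines of non-commutative bookkeeping; yours buys a marginally cleaner and more general derivation at no extra cost.
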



\subsection{Alternative approaches}

There are other methods to evaluate the Jacobian--vector products.
By reducing a convex data fitting problem to the conic form of the optimization problem, the Jacobian--vector products can be evaluated as a system of linear equations  \citep{diffcp2019} or via perturbations to the input solver \citep{paulus2024lpgd}.
Other work enables the minimization of non-convex least squares problems \citep{pineda2022theseus}.
We leave extending our efficient method to particular cases of interest such as non-differentiable losses (even jointly with non-differentiable regularizers) to future work.

\section{Numerical Experiments}
\label{sec:num}

We now demonstrate the effectiveness of RandALO on a variety of problems, 
including with different losses, regularizers, and models on the data. In particular, we show how RandALO is able to outperform $K$-fold CV (typically, $5$-fold CV as a single point of comparison) in terms of both accuracy of the risk estimate and computational cost.
Unless otherwise specified, we implemented these experiments as follows.

\paragraph{RandALO implementation.}
We wrote an open-source Python implementation of RandALO based on PyTorch \citep{paszke2019pytorch} available on PyPI as \texttt{randalo} and at \url{https://github.com/cvxgrp/randalo}, capable of accepting arbitrary black-box implementations of Jacobian--vector products.

\paragraph{Jacobian--vector product implementation.}
We implement our Jacobian-vector products using the torch\_linops library (available at \url{https://github.com/cvxgrp/torch_linops}) and SciPy \citep{2020SciPy-NMeth} to solve the quadratic program in \cref{eq:jvp-qp}.
For problems with dense data and $n < 5000$, we compute the solution directly via the LDL factorization, for sparse data we apply the conjugate gradient (CG) method, and for large dense data we apply MINRES.

\paragraph{Machine learning implementation.}
We used standard and well-optimized methods for fitting common models provided by scikit-learn \citep{scikit-learn} for lasso, logistic regression, and cross-validation.
For first-difference $\ell_1$-regularized regression we implemented the solution using the Clarabel solver \citep{goulart2024clarabel} through CVXPY~\citep{diamond2016cvxpy,agrawal2018rewriting}.
For kernel logistic regression, we implement a Newton's method solver using CG.

\paragraph{Hyperparameter selection.} 
We consider high-dimensional problems similar to those for which ALO is known to provide consistent risk estimation \citep{xu2021consistent}. For boxplots, we select hyperparameters roughly of the same order as the optimal parameter given the noise level such that the risks at those hyperparameters are of interest.

\paragraph{Risk metrics.} We consider the regression risk metric of squared error $\phi(y, z) = (y - z)^2$ as well as the classification risk metric of misclassification error $\phi(y, z) = \ind\set{yz < 0}$. For each risk estimate $\hat{R}$, we report the relative error $(\hat{R} - R(\widehat{\vbeta}))/R(\widehat{\vbeta})$ from the conditional risk $R(\widehat{\vbeta}) = \expect{\phi(y, \vx^\transp \widehat{\vbeta}) \,|\, \widehat{\vbeta}}$, with boxplots depicting 100 random trials.

\paragraph{Relative time.} All relative times are reported with respect to the time to fit the model $\widehat{\vbeta}$ according to \cref{eq:erm} on the full training data. For CV, the time reported includes only the fitting of the models $\widehat{\vbeta}_{-\setP}$ in \cref{eq:cv}, and computing $\widehat{\vbeta}$ after model selection would be an additional computational cost. For ALO methods, we include the original fitting of the model $\widehat{\vbeta}$ and add the time required to run \Cref{alg:alo-randomized} (omitting the inflation debiasing step for BKS-ALO).

\paragraph{Compute environment.} 
We compute on Stanford's Sherlock cluster, drawing new random data and BKS vectors each trial. We run each trial on a single core with 16GB of memory, reporting wall-clock times for each computation done. 

\subsection{Lasso: Efficiency and problem scaling}
\label{sec:num:lasso}

\begin{figure}[t]
    \centering
    \includegraphics[width=\textwidth]{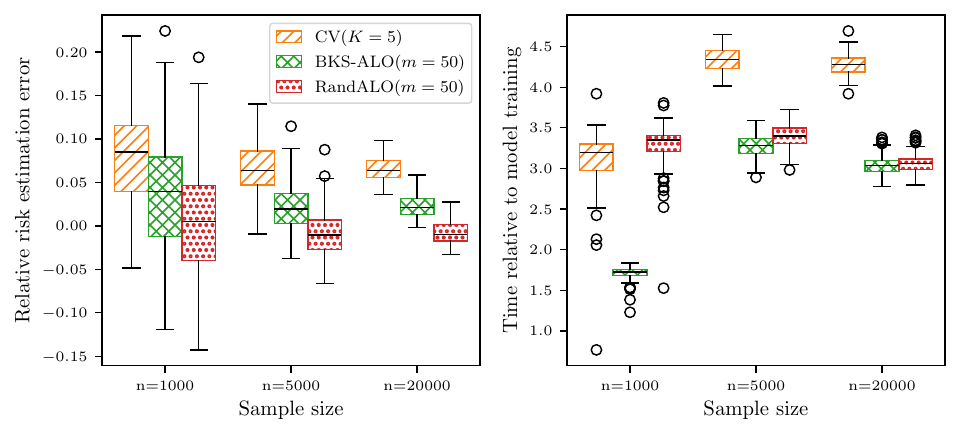}
    \caption{\textbf{Left:} For a lasso problem in proportionally high dimensions $p = n$, CV suffers from bias that does not vanish with $n$ even as risk concentrates. Meanwhile, even BKS-ALO with its biased risk estimate at only $m = 50$ Jacobian--vector products is more accurate than CV at lower computational cost (\textbf{right}). Going further, RandALO removes the bias for the same choice of $m$ with a computational overhead that vanishes as $n$ increases. }
    \label{fig:lasso-scaling-normal}
\end{figure}

In this experiment, we generate $\vx_i \sim \normal(\vzero, \mI_p)$ and $y_i = \vx_i^\transp \vbeta^* + \epsilon_i$ for $\vbeta^*$ having $s=p/10$ non-zero elements drawn as i.i.d. $\normal(0, 1/s)$ and $\epsilon_i \sim \normal(0, 1)$. 
We fit a lasso model with $\ell(y, z) = \tfrac{1}{2} (y - z)^2$ and $r(\vbeta) = \lambda \norm[1]{\vbeta}$.
We fix $n = p$ and set $\lambda = \sqrt{n}$, which balances the loss and regularizer to be of the same order. The conditional squared error risk is given by
\begin{align}
    R(\widehat{\vbeta}) = \expect{\phi(y, \vx^\transp \widehat{\vbeta}) \,|\, \widehat{\vbeta}}= \norm[2]{\widehat{\vbeta} - \vbeta^*}^2 + 1.
    \label{eq:cond-risk-squared-error}
\end{align}

In \Cref{fig:cv-tradeoff} we consider this problem for $n = p = 5000$ using the direct dense solver for Jacobian--vector products. Even with the highly optimized coordinate descent solver for lasso, the estimation error--computation trade-off curve for CV is entirely dominated by both BKS-ALO and RandALO. For low numbers of Jacobian--vector products $m$, RandALO provides roughly an order of magnitude improvement in risk estimation and a small additional cost. For larger $m$, RandALO does converge more quickly to the limiting risk estimate, but both BKS-ALO and RandALO are within the standard deviation of the conditional risk from the marginal risk, which is what CV methods provide an estimate of, and so either method could be equally trusted.
We consider the same problem setup in  \Cref{fig:lasso-bks-convergence} where we demonstrate the debiasing improvement of RandALO over BKS-ALO.

In \Cref{fig:lasso-scaling-normal}, we consider increasing values of $n$. We use MINRES to compute the Jacobian--vector products for $n > 1000$ since it scales better to large $n$.
We see that the inconsistency of CV remains the same with increasing problem dimension without vanishing, while with only $m=50$ Jacobian--vector products, we have virtually eliminated all bias in RandALO at only a fraction of the time of $5$-fold CV.
The overhead due to applying the debiasing procedure of RandALO is substantial for $n = 1000$, but it becomes negligible as $n$ increases.



\subsection{Sparse first-difference regression}

In this experiment, we demonstrate the dramatic improvement of randomized ALO over CV for non-standard problems. We now generate data in the same manner as in \Cref{sec:num:lasso} except we let $\vbeta^*$ be the cumulative sum of $\vb^* \in \reals^p$ having $s = p / 10$ non-zero elements with indices selected at random without replacement and values drawn i.i.d. $\normal(0, 2 / sp)$, such that $\vbeta^*$ is piecewise constant $\expect{\norm[2]{\vbeta^*}^2} = 1$,
and we let $\epsilon_i \sim \normal(0, 0.01)$. 
Accordingly, we use the first-differences regularizer $r(\vbeta) = \lambda \|\mD \vbeta\|_1$ where 
\[
\mD = 
\begin{bmatrix}
-1 & 1 & 0 & \cdots & 0 \\
0 & -1 & 1 & \cdots & 0 \\
\vdots & \vdots & \vdots & \ddots & \vdots \\
0 & 0 & \cdots & -1 & 1 \\
\end{bmatrix}.
\]
This regularizes $\widehat\vbeta$ towards being piecewise constant.
We fix $n = p$ and $\lambda = p$.
Unlike the lasso, this regularizer is used much more rarely and lacks the same highly specialized solving methods.
This results in significantly slower solve times and, as shown in \Cref{fig:first-diff-scaling-normal}, being able to avoid the expense of having to fit multiple models, instead solving the quadratic program in \cref{eq:lin-lasso-qp} for Jacobian--vector products, yields much improved runtime using RandALO at no cost to accuracy.

\begin{figure}[t]
    \centering
    \includegraphics[width=\textwidth]{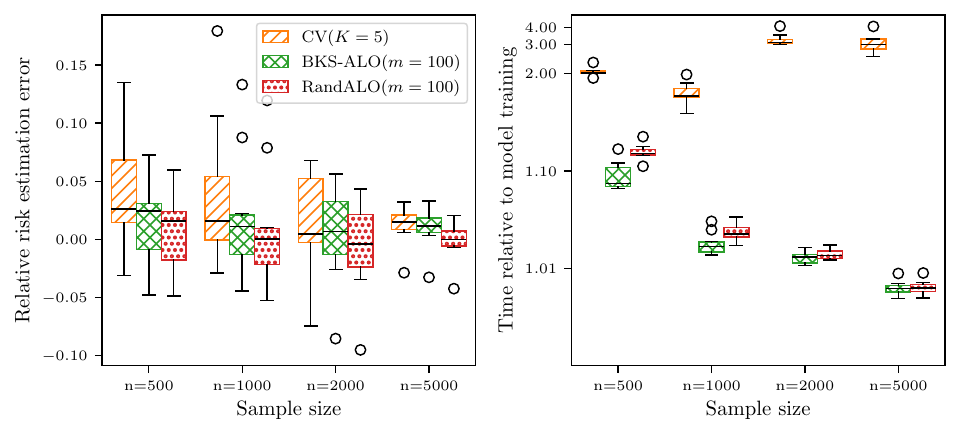}
    \caption{
    On the more computationally involved problem of sparse first-differences, randomized ALO provides similar statistical improvements over $5$-fold CV as in the lasso problem but dramatically improves computationally. The Jacobian--vector products for ALO require only the solving of the quadratic program in \cref{eq:lin-lasso-qp}, while CV must repeatedly solve a much more difficult convex optimization problem.
    Relative time on the $y$-axis is plotted on a $\log(y - 1)$ scale to emphasize the minimal additional cost of RandALO after the model training. For this experiment, we report box plots for only 10 trials, since fitting models at the largest scales takes a few hours per trial for $5$-fold CV.}
    \label{fig:first-diff-scaling-normal}
\end{figure}

\subsection{Logistic ridge regression}
\label{sec:logistic}

In this example, we demonstrate that randomized ALO works outside of squared error regression. We consider a binary classification problem using the logistic loss function $\ell(y, z) = \log(1 + e^{-yz})$ used for binary classification regularized with the ridge penalty $r(\vbeta) = \tfrac{\lambda}{2}\norm[2]{\vbeta}^2$.
We again consider $\vx_i \sim \normal(\vzero, \mI_p)$, while for labels we let $y_i = 1$ with probability $\sigma(\rho \vx_i^\transp \vbeta*)$ for $\sigma(u) = 1 / (1 + e^{-u})$, $\rho = 5$, and $\vbeta^*$ having $s = p / 4$ nonzero elements drawn as i.i.d.\ $\normal(0, 1/s)$, and let $y_i = -1$ otherwise.
We let $n = 10000$ and $p = 4000$ and choose $\lambda = n$.
We evaluate misclassification error with risk function $\phi(y, z) = \ind\set{yz < 0}$, which has the conditional risk
\begin{align}
    R(\widehat{\vbeta}) = \expect{\phi(y, \vx^\transp \widehat{\vbeta}) \,|\, \widehat{\vbeta}}
    = \expect{\sigma(-\mathrm{sgn}(\widehat{Z}) \rho Z)}
    \quad \text{for} \quad
    \begin{bmatrix}
        Z \\
        \widehat{Z}
    \end{bmatrix}
    \sim \normal \Big( \vzero,
    \begin{bmatrix}
        \vbeta^{*\transp} \\
        \widehat{\vbeta}^\transp
    \end{bmatrix}
    \begin{bmatrix}
        \vbeta^* & 
        \widehat{\vbeta}
    \end{bmatrix}
    \Bigg),
\end{align}
which we can compute via numerical integration with the 2-dimensional Gaussian density. We emphasize that this risk is distinct from the logistic loss used to fit the model.
For this problem, we have slower solvers available than for lasso, making the overhead for randomized ALO using the direct solver for Jacobian--vector products small compared to $5$-fold CV, as shown in \Cref{fig:comp-all} (left).

\begin{figure}[t]
    \centering
    \includegraphics[width=\textwidth]{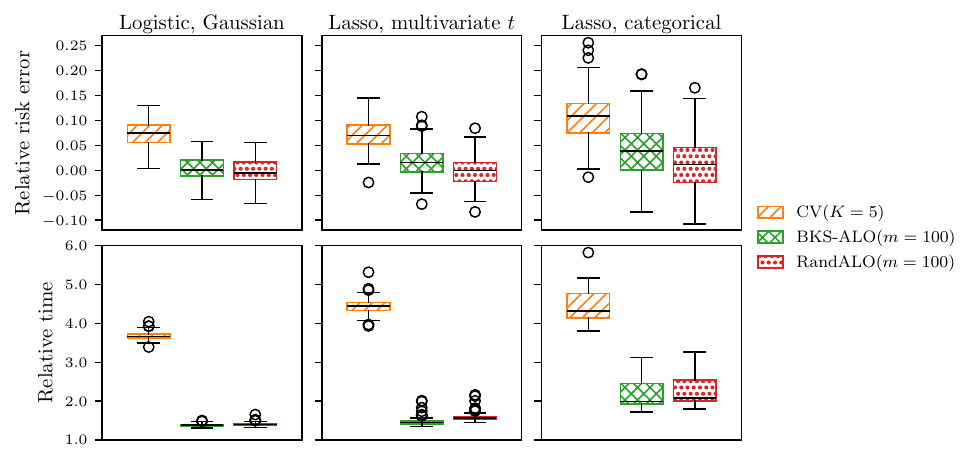}
    \caption{RandALO provides consistent risk estimation and outperforms CV on a variety of problems and data types beyond least squares regression and Gaussian data. Here we show logistic regression with ridge penalty on Gaussian data (\textbf{left}) as well as lasso on multivariate $t$ elliptical data (\textbf{middle}) and categorical data (\textbf{right}). 
    }
    \label{fig:comp-all}
\end{figure}

\subsection{Multivariate \texorpdfstring{$t$}{t} data}

In this example we consider our first departure from known guarantees for the consistency of ALO. We consider the same setting as \Cref{sec:num:lasso} except we let the data be drawn from a scaled multivariate $t$-distribution. That is, $\vx_i \sim \sqrt{t_i} \vz_i$ where $\vz_i \sim \normal(\vzero, \mI_p)$ and $(\nu - 2) / t_i \sim \chi_\nu^2$ where $\chi_\nu^2$ denotes the chi-squared distribution with $\nu$ degrees of freedom. We let $\nu = 5$. Since $\expect{\vx \vx^\transp} = \mI_p$, we have the same expression for conditional risk from \cref{eq:cond-risk-squared-error} as in the Gaussian case. 

Notably, because of the instance-wise scalars $t_i$, the diagonal Jacobian elements $\tilde{J}_{ii}$ do not all concentrate to the same value, meaning that GCV, which uses $\tfrac{1}{n} \tr[\widetilde{\mJ}]$ where ALO uses $\tilde{J}_{ii}$ in \cref{eq:alo}, cannot be consistent in general. However, as shown in \Cref{fig:comp-all} (middle), ALO provides consistent risk estimation and can be implemented efficiently using our randomized method.


\subsection{Categorical data}
In our next example, we go even further from standard random matrix assumptions. We sample $\vx_i$ by drawing $i_1, \ldots, i_d$ independently and uniformly from $[k]$ and concatenating standard basis vectors to form $\vx_i = \sqrt{k}[\ve_{i_1}^\transp \ldots \ve_{i_d}^\transp]^\transp \in \reals^p$ where $p = dk$. We then generate $y_i = \vx_i^\transp \vbeta^* + \epsilon_i$ for $\vbeta^*$ having $s = p/10$ non-zero elements drawn as i.i.d.\ $\normal(0, 1/s)$ and $\epsilon_i \sim \normal(0, 1/2)$. We choose $n = d = 2000$ and $k = 10$ and generate $\mX$ as a sparse data matrix. We apply lasso with $\lambda = \sqrt{d}$. For this problem, we have the covariance structure
\begin{align}
    \expect{\vx \vx^\transp} = \begin{bmatrix}
        \mI_k & \tfrac{1}{k}\vone_k & \ldots & \tfrac{1}{k}\vone_k \\
        \tfrac{1}{k}\vone_k & \mI_k & \ldots & \tfrac{1}{k}\vone_k \\
        \vdots & \vdots & \ddots & \vdots \\
        \tfrac{1}{k}\vone_k & \tfrac{1}{k}\vone_k & \ldots & \mI_k
    \end{bmatrix},
\end{align}
where $\vone_k$ denotes the $k \times k$ matrix of all ones. This gives us the conditional squared error 
\begin{align}
    R(\widehat{\vbeta}) = \expect{\phi(y, \vx^\transp \widehat{\vbeta}) \,|\, \widehat{\vbeta}} =  
    \norm[2]{\vbeta^* - \widehat{\vbeta}}^2 
    + \frac{1}{k} \big(\sum_{j=1}^p \beta^*_j - \hat{\beta}_j \big)^2 
    - \frac{1}{k} \sum_{j=1}^d \big( \sum_{j' > (j - 1)k}^{jk} \beta^*_{j'} - \hat{\beta}_{j'} \big)^2 + \frac{1}{2}.
\end{align}
As we show in \Cref{fig:comp-all} (right), randomized ALO is still able to provide an accurate risk estimate even for categorical data. As in the other cases, it provides a more accurate risk estimate in less time than $5$-fold CV, here using the iterative CG solver on sparse $p=20000$-dimensional data.

\subsection{Hyperparameter sweep}

In settings where CV is particularly poorly behaved, RandALO provides a more accurate sense of how risk varies with hyperparameters.
We ran an experiment using the same setup as in \Cref{sec:num:lasso} but with $n = 5000$, $p=25000$, $s=250$, and $\epsilon_i \sim \normal(0, 4)$. Sweeping across a whole range of lasso regularization parameters $\lambda=\lambda_0 / \sqrt{p}$, we show in \Cref{fig:hyperparam-sweep} that RandALO provides an extremely high quality risk estimate with minimal computational overhead, eliminating nearly all of the bias of BKS-ALO and providing a significantly better risk estimate curve over CV with extremely minimal computational overhead.

\begin{figure}[t]
    \centering
    \includegraphics[width=\textwidth]{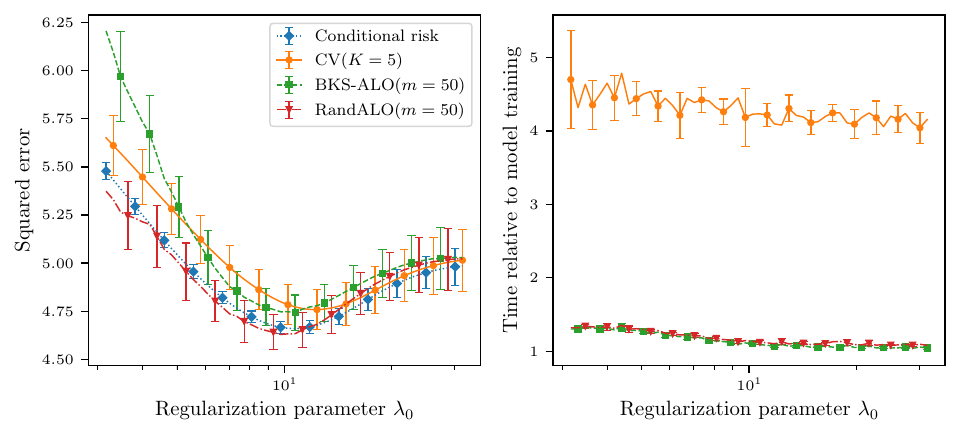}
    \caption{Randomized ALO provides consistent risk estimation across the entire range of regularization parameters, consistently beating $5$-fold CV in both risk estimation and computation. Alarmingly, CV's biased risk curve is minimized by a different value of $\lambda_0$, as demonstrated in \Cref{tab:parameter-selection}. While BKS-ALO is very biased for small values of $\lambda$, this is nearly completely resolved by the debiasing step of RandALO.
    Error bars denote standard deviation over 10 trials.}
    \label{fig:hyperparam-sweep}
\end{figure}
\begin{table}[t]
    \begin{center}
    \begin{tabular}{c|c|ccc|ccc|ccc}
    & &
    \multicolumn{3}{c}{CV($K$)} &
    \multicolumn{3}{c|}{BKS-ALO($m$)} &
    \multicolumn{3}{c|}{RandALO($m$)} \\
    & Conditional risk &
    $2$ & $5$ & $10$ &
    $20$ & $50$ & $100$ &
    $20$ & $50$ & $100$ \\
    \midrule
    $\mathbf{{\vlambda}_0^*=10}$ & \textbf{100} &
    0 & 37 & \textbf{89} & 
    \textbf{67} & \textbf{99} & \textbf{100} &
    \textbf{100} & \textbf{100} & \textbf{100} \\
    $\lambda_0^*=15$ & 0 &
    \textbf{100} & \textbf{63} & 11 &
    33 & 1 & 0 &
    0 & 0 & 0 \\
    \bottomrule
    \end{tabular}
    \end{center}
    \caption{For the same data from \Cref{fig:hyperparam-sweep}, we report which of two hyperparameter values has a lower risk estimate over 100 trials. The bias present in CV, and in BKS-ALO for small $m$, results in the wrong parameter being chosen, particularly often for CV. Meanwhile, the debiased RandALO consistenly selects the correct value of $\lambda_0$ every trial.}
    \label{tab:parameter-selection}
\end{table}

In fact, for this problem, the bias of CV is sufficiently severe as to yield incorrect hyperparameter selection on the basis of the risk estimate. To emphasize this, in \Cref{tab:parameter-selection} we show over the 100 trials how many times the choice $\lambda_0 = 10$, which is around the global minimizer, would be chosen over $\lambda_0 = 15$, which is nearly halfway to null model risk achieved around $\lambda_0 = 30$. Thus for very high-dimensional problems, the risk estimate of RandALO can produce better model selection decisions than CV, in just a fraction of the time.


\subsection{Kernel logistic regression on Fashion-MNIST}

Although we have developed RandALO for linear models, this does not restrict us to models that are linear in the data space. Using \Cref{cor:kernel}, we can also obtain risk estimates for kernel methods with RKHS norm penalties, though with no proven guarantee that ALO provides consistent risk estimation in this setting. In this example on real data from the Fashion-MNIST dataset \citep{xiao2017fashion}, we apply kernel logistic regression using the same loss and regularizer as in \Cref{sec:logistic} on the binary task of differentating ``casual'' (t-shirt, pullover, sneaker) and ``formal'' (shirt, coat, ankle boot) clothing. We select $n=5000$ training samples and 20000 test samples at random and use the radial basis function kernel $e^{-\gamma \norm[2]{\vx - \vx'}^2}$. The data points are $784$-dimensional vectors of pixel intensities taking values from $0$ to $255$.

In \Cref{fig:fashion-mnist}, we compare the resulting risk estimates for 5-fold CV and RandALO as a function of the ridge parameter $\lambda$ and kernel parameter $\gamma$. Both CV and RandALO provide biased risk estimates, and neither selects a parameter that minimizes the test error. Both do select good parameters, with CV achieving $12.40$\% test error and RandALO achieving $11.83$\% test error, compared to the best test error of $11.81$\%, but CV requires nearly triple the amount of computational effort to do so. RandALO provides a better risk estimate than CV where the test error is also small, but outside of the low risk basin the risk estimation is worse, such as for $\gamma = 10^{-5}$, which corresponds to the kernel becoming too narrow for the data and reducing the effectiveness of ALO.

\begin{figure}[t]
    \centering
    \includegraphics[width=\linewidth]{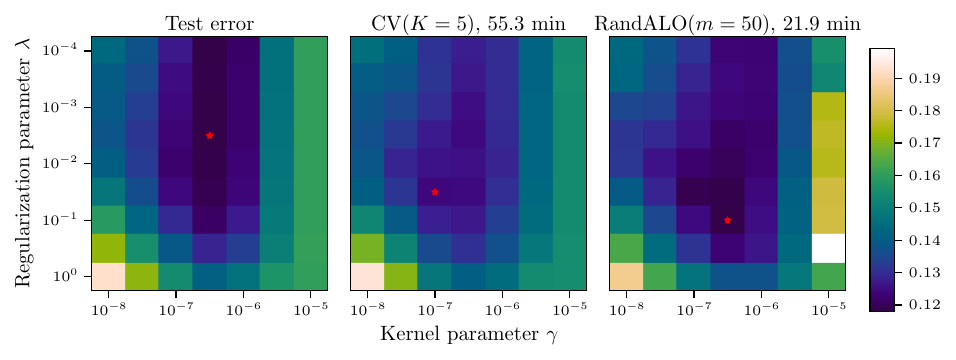}
    \caption{Even for real data from Fashion-MNIST with kernel logistic regression, randomized ALO provides comparable risk estimation to 5-fold CV in almost one third of the computaitonal time, and only taking 3 additional minutes beyond the 18.9 minutes required to train the full model for each $(\lambda, \gamma)$ pair. Both CV and RandALO exhibit some bias with minimizers (red stars) favoring larger values of $\lambda$. For very large $\gamma$, as the kernel matrix approaches $\mI$, the assumptions required for ALO break down and the risk estimate becomes poor.}
    \label{fig:fashion-mnist}
\end{figure}

\subsection{Lasso on genetic data}
In this experiment, we took SNP data from the UK Biobank \citep{ukbiobank} of British men between ages 20 and 60 and trained a model to predict their height.
SNP data takes values in $\{0, 1, 2\}$.
We set aside $10\%$ of the data as a holdout set to validate the risk estimates of RandALO and 5-CV.
We use the Adelie solver \citep{yang2024fast} in its default configuration to fit a lasso model with an unpenalized intercept and an unpenalized first 10 principal components of the SNP matrix.
In this problem, we had $n = 75\,002$, $p = 591\,523$, and used $8\,295$ samples for validation.
As shown in \Cref{fig:genetics-oos-alo}, ALO produces nearly the same prediction error curve as the out-of-sample holdout set.

We report the y-axis as percent of features selected, RandALO's numerical performance becomes poor after $40\%$ of possibly selected features have been selected. 
This is in-part a limitation of the MINRES-based \citep{paige1975solution} solver we used to evaluate \eqref{eq:en-penality-jacobian}.
Further, we do not report runtime performance numbers for this experiment:
Adelie is a coordinate descent method, so computing new entries along the regularization path is relatively cheap.
RandALO is evaluated from scratch for each point on the regularization path.
Future work is needed to design methods to update the Jacobian along the regularization path to perform competitively with Adelie's runtime performance.
Notably, by reusing the selected folds for each 5-CV estimate along the regularization path, 5-CV is able to find its prediction error estimate for the full regularization path faster than RandALO.
However, by reusing the selected folds, the 5-CV estimates for each point on the path are no longer independent conditional on the data.
This risks 5-CV's errors being correlated along the entire regularization path, possibly giving a biased selection of regularizer.

\begin{figure}[t]
    \centering
    \input{figures/genetics.pgf}
    \caption{ On real data from the UK Biobank,  RandALO accurately estimates the risk comaparably or better than CV.
    Notably as the model includes more variables and therefore the Jacobian approaches $\mI$, small errors in the diagonal estimate get amplified by the ${1 - \tilde J_{ii}}$ term in the denominator, causing the risk estimates to become poor.}
    \label{fig:genetics-oos-alo}
\end{figure}

\section{Discussion}

We have presented a randomized method for computing the approximate leave-one-out risk estimate that enables efficient hyperparameter tuning in time comparable to and often much better than $K$-fold cross-validation. The key to our method is combining ALO with randomized diagonal estimation along with the crucial proper handling of estimation noise to reduce bias and variance, resulting in our requiring only a fairly small roughly constant number of quadratic program solves regardless of problem size, scaling very favorably to large datasets.

There are a few important extensions that we leave for subsequent work. Firstly, it is important to extend to non-differentiable losses, which include popular losses such as the hinge loss for support vector machines and the pinball loss for quantile regression. These losses are often also paired with non-differentiable regularizers such as the $\ell_1$ norm, and so it is important to be able to handle joint non-differentiability. With second derivatives of the loss arising very naturally due to the Newton step of ALO, more care is needed in obtaining the appropriate update and making the appropriate adjustments to the randomized diagonal estimation procedure to deal with special non-differentiability behavior.

Secondly, in large-scale machine learning, arguably the most common and important task is multi-class classification. Extending to this case would require first extending ALO to multidimensional outputs and then incorporating an appropriate extension of randomized ``diagonal'' estimation to these higher order tensors. 
With a proper extension to multi-class classification, RandALO could be applied to neural networks taking the Jacobian based perspective of ALO from \Cref{sec:generic-alo}. Based on the results of \citet{park2023trak} who showed compelling results of applying a method based on ALO for data attribution, we anticipate risk estimates based on linearized neural networks performing very well, having the potential to save precious training data from being set aside for validation.




\section*{Acknowledgements}
The authors would like to thank Stephen Boyd, Alice Cortinovis, Pratik Patil, and James Yang for many helpful discussions.
PTN was supported in part by the National Science Foundation Graduate Research Fellowship Program under Grant No.\ DGE-1656518. Any opinions, findings, and conclusions or recommendations expressed in this material are those of the author(s) and do not necessarily reflect the views of the National Science Foundation.
DL was supported by ARO grant 2003514594 and Stanford Data Science. 
EJC was supported by the Office of Naval Research grant N00014-24-1-2305, the National Science Foundation grant DMS-2032014, the Simons Foundation under award 814641.
Some of the computing for this project was performed on the Sherlock cluster. The authors would like to thank Stanford University and the Stanford Research Computing Center for providing computational resources and support that contributed to these research results.

\appendix
\section{Generic derivation of ALO}
\label{sec:generic-alo}

Instead of a linear model $\vx^\transp \vbeta$, consider an arbitrary model $h_\vtheta(\vx)$ differentiably parameterized by $\vtheta \in \reals^q$. The fully trained model $h_{\widehat{\vtheta}}$ satisfies the first-order optimality condition
\begin{align}
    \vzero \in \sum_{i=1}^n \ell'(y_i, h_{\widehat{\vtheta}}(\vx_i)) \nabla_{\widehat{\vtheta}} h_{\widehat{\vtheta}}(\vx_i) + \partial_{\widehat{\vtheta}} r(\widehat{\vtheta}).
    \label{eq:generic-erm-first-order}
\end{align}
We seek to approximate the LOO solution $\widehat{\vtheta}_{-i}$, which satisfies instead
\begin{align}
    \vzero \in \sum_{j \neq i} \ell'(y_j, h_{\widehat{\vtheta}_{-i}}(\vx_j)) \nabla_{\widehat{\vtheta}_{-i}} h_{\widehat{\vtheta}_{-i}}(\vx_j) + \partial_{\widehat{\vtheta}_{-i}} r(\widehat{\vtheta}_{-i}).
    \label{eq:generic-loo-first-order}
\end{align}
The key idea is to start from $\widehat{\vtheta}$ and follow the path of solutions that still satisfy \cref{eq:generic-erm-first-order}; that is, they should still be the solution to a regularized empirical risk minimization problem. As we seek to satisfy \cref{eq:generic-loo-first-order}, this leaves one degree of freedom in \cref{eq:generic-erm-first-order}, namely, the value of $y_i$, which does not appear in \cref{eq:generic-loo-first-order}. Now following \citet{rad2020alo}, we take a Newton step of optimization starting from $\widehat{\vtheta}$ towards a root of the LOO optimality condition. Under \cref{eq:generic-erm-first-order}, this is equivalent to simply finding a root of the left out loss
\begin{align}
    \ell'(y_i, h_\vtheta(\vx_i)) \nabla_\vtheta h_\vtheta(\vx_i).
\end{align}
Assuming that $\nabla_\vtheta h_\vtheta(\vx_i) \neq \vzero$ for any $\vtheta$, a root must be a root of $\ell'(y_i, h_\vtheta(\vx_i))$. Recall that we had only one degree of freedom in $y_i$, and denote $\hat{y}_i = h_{\widehat{\vtheta}}(\vx_i)$, which is a function of $y_i$. Then we can apply one step of Newton's method starting from $\hat{y}_i$ to obtain the ALO prediction:
\begin{align}
    \tilde{y}_i 
    = \hat{y}_i - \frac{\ell'(y_i, \hat{y}_i)}{\frac{d \ell'(y_i, \hat{y}_i)}{d \hat{y}_i}}
    = 
    \hat{y}_i - \frac{\ell'(y_i, \hat{y}_i)}{\frac{\partial \ell'(y_i, \hat{y}_i)}{\partial y_i} \frac{\partial y_i}{\partial \hat{y}_i} + \ell''(y_i, \hat{y}_i)}
    ,
\end{align}
where $d/d \hat{y}_i$ denotes the total derivative and must be taken through both arguments of $\ell'$. Since derivatives of $\ell'$ are inexpensive to evaluate, the fundamental quantity of ALO is the partial derivative $\partial \hat{y}_i/\partial y_i = J_{ii}$, and with reparameterization into $\tilde{J}_{ii}$, we see that this procedure coincides exactly with \cref{eq:alo}.

\section{Proof of \Cref{thm:bks-clt}}
\label{sec:thm:bks-clt:proof}

The concentration result that we will leverage is the Hanson--Wright inequality for $\alpha$-sub-exponential random vectors, generalizing the more common result for sub-Gaussian random vectors. Here the Orlicz (quasi-)norm is defined as 
\begin{align}
    \norm[\psi_\alpha]{X} \defeq \inf \set[\Big]{t > 0 \colon \E \bracket[\Big]{\exp \set[\Big]{\frac{|X|^\alpha}{t^\alpha}}} \leq 2}.
\end{align}
A random variable $X$ is $\alpha$-sub-exponential if $\norm[\psi_\alpha]{X}$ is finite.
\begin{lemma}[Proposition 1.1 and Corollary 1.4, \citealp{gotze2021concentration}]
\label{lem:hanson-wright}
Let $\vx \in \reals^n$ be a random vector with independent components $x_i$ which satisfy $\expect{x_i} = 0$ and $\norm[\psi_\alpha]{x_i} \leq M$. Let $\va \in \reals^n$ and let $\mA$ be a symmetric $n \times n$ matrix. Then for some constant $c_\alpha > 0$ depending only on $\alpha$, for every $t \geq 0$,
\begin{align}
    \Pr(|\vx^\transp \va| \geq t) 
    &\leq 2 \exp \bracket[\Big]{-c_\alpha \min \set[\Big]{\frac{t^2}{M^2 \norm[2]{\va}^2}, \paren[\Big]{\frac{t}{M \norm[\infty]{\va}}}^{\alpha}}} \\
    \Pr(|\vx^\transp \mA \vx - \expect{\vx^\transp \mA \vx}| \geq t) &\leq 2 \exp \bracket[\Big]{-c_\alpha \min \set[\Big]{\frac{t^2}{M^4 \norm[F]{\mA}^2}, \paren[\Big]{\frac{t}{M^2 \norm{\mA}}}^{\frac{\alpha}{2}}}}.
\end{align}
\end{lemma}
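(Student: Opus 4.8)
The final statement is the Hanson--Wright inequality for $\alpha$-sub-exponential vectors, which \citet{gotze2021concentration} establish and which the paper simply invokes; the sketch below is how I would prove it directly. The two displayed bounds are not independent: the linear-form bound is the workhorse that drives the quadratic-form bound through decoupling, so I would prove them in that order. Throughout, since $\alpha \le 1$ forces heavier-than-exponential tails and the moment generating function may fail to exist, I would abandon Chernoff/MGF arguments and instead control $L^p$ norms, converting moment bounds to tails via $\Pr(\abs{Z} \ge t) \le t^{-p}\,\E\abs{Z}^p$ optimized over $p$.

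For the linear bound I would view $\vx^\transp\va = \sum_i a_i x_i$ as a sum of independent centered summands with $\norm[\psi_\alpha]{a_i x_i} \le M\abs{a_i}$, and aim for the two-regime moment estimate $\norm[p]{\sum_i a_i x_i} \le C(\sqrt{p}\,M\norm[2]{\va} + p^{1/\alpha} M \norm[\infty]{\va})$ for a constant $C$ depending only on $\alpha$. The first term reflects the sub-Gaussian bulk behavior and follows from the $L^2$ structure (Khintchine/Rosenthal after symmetrization); the second is the contribution of the single worst heavy-tailed summand, whose $L^p$ norm grows like $p^{1/\alpha}$. Feeding this into the Markov-plus-optimization recipe yields precisely the stated $\min$ of a Gaussian exponent $t^2/(M^2\norm[2]{\va}^2)$ and an $\alpha$-power exponent $(t/(M\norm[\infty]{\va}))^\alpha$.

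For the quadratic form I would split $\vx^\transp\mA\vx - \E[\vx^\transp\mA\vx]$ into diagonal and off-diagonal parts. The diagonal part $\sum_i A_{ii}(x_i^2 - \E x_i^2)$ is again a linear form, now in the centered variables $x_i^2 - \E x_i^2$; these are $\tfrac{\alpha}{2}$-sub-exponential with Orlicz norm at most $C M^2$, because a product of two $\alpha$-sub-exponential variables halves the Orlicz exponent. Applying the linear bound with $\alpha \mapsto \alpha/2$, $\norm[2]{(A_{ii})_i} \le \norm[F]{\mA}$, and $\max_i\abs{A_{ii}} \le \norm{\mA}$ reproduces the claimed exponents $t^2/(M^4\norm[F]{\mA}^2)$ and $(t/(M^2\norm{\mA}))^{\alpha/2}$. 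The off-diagonal chaos $\sum_{i\ne j}A_{ij}x_i x_j$ is the real content: I would decouple it (de la Pe\~na--Montgomery-Smith) so that its tails match those of $\sum_{i,j}A_{ij}x_i x_j'$ for an independent copy $\vx'$, and then condition on $\vx$, whereupon it becomes the linear form $\innerprod{\vx'}{\mA\vx}$ to which the first bound applies with the random coefficient vector $\vb = \mA\vx$.

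The delicate step is closing this argument without circularity. The conditional linear bound involves $\norm[2]{\mA\vx}$ and $\norm[\infty]{\mA\vx}$, and $\norm[2]{\mA\vx} = (\vx^\transp\mA^\transp\mA\vx)^{1/2}$ is itself a quadratic form, so a naive estimate reintroduces exactly the object being bounded. I would break the loop by staying at the level of moments: bound $\E_{\vx'}\abs{\innerprod{\vx'}{\mA\vx}}^p$ conditionally, take $\E$ over $\vx$, and control $\bignorm[p]{\norm[2]{\mA\vx}}$ via a Rosenthal/Lata\l{}a-type moment inequality for the quadratic form in $\mA^\transp\mA$ (using $\norm[F]{\mA^\transp\mA}\le\norm{\mA}\norm[F]{\mA}$ and $\norm{\mA^\transp\mA}=\norm{\mA}^2$), and $\norm[\infty]{\mA\vx}$ by a union bound over the $n$ linear forms given by the rows of $\mA$, each having Euclidean norm at most $\norm{\mA}$. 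This produces the self-reference-free estimate $\norm[p]{\sum_{i\ne j}A_{ij}x_ix_j} \le C(\sqrt{p}\,M^2\norm[F]{\mA} + p^{2/\alpha}M^2\norm{\mA})$, which converts to a tail with the stated exponents. Combining the diagonal and off-diagonal contributions (and the two regimes within each) finishes the proof. The main obstacles are this circularity and the careful bookkeeping of Orlicz exponents---in particular verifying that products $x_ix_j$ carry exponent $\alpha/2$ rather than $\alpha$, which is what makes $\alpha/2$ the correct tail power in the quadratic term---together with tracking the $\alpha$-dependent constant $c_\alpha$.
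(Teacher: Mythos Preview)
The paper does not prove this lemma; it is stated as a direct citation of Proposition~1.1 and Corollary~1.4 of \citet{gotze2021concentration} and is used as a black box in the proof of \Cref{thm:bks-clt}. You correctly acknowledge this at the outset. Your sketch follows the standard architecture for Hanson--Wright inequalities in the heavy-tailed regime---moment bounds for linear forms via Rosenthal/Lata\l{}a-type estimates, a diagonal/off-diagonal split of the quadratic form, decoupling for the off-diagonal chaos, and a moment-level argument to avoid the circularity in controlling $\norm[2]{\mA\vx}$---which is essentially the route taken in the cited reference. Your identification of the key technical points (the halving of the Orlicz exponent for products giving the $\alpha/2$ tail, and the need to break the self-reference at the level of $L^p$ norms rather than tails) is accurate, so there is nothing substantive to correct.
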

From this result we also derive the following corollary for asymmetric quadratic forms.
\begin{corollary}
    \label{cor:asymmetric-hanson-wright}
    Let $\vx \in \reals^m$ and $\vz \in \reals^n$ be random vectors with independent components $x_i, z_i$ which satisfy $\expect{x_i} = \expect{z_i} = 0$ and $\norm[\psi_{\alpha}]{x_i}, \norm[\psi_{\alpha}]{z_i} \leq M$, and let $\mA \in \reals^{m \times n}$ be a matrix. Then for some constant $c_\alpha$ depending only on $\alpha$, for every $t \geq 0$,
    \begin{align}
        \Pr(|\vx^\transp \mA \vz| \geq t) &\leq 2 \exp \bracket[\Big]{-c_\alpha \min \set[\Big]{\frac{t^2}{M^4 \norm[F]{\mA}^2}, \paren[\Big]{\frac{t}{M^2 \norm{\mA}}}^{\frac{\alpha}{2}}}}.
    \end{align}
\end{corollary}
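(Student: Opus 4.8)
The plan is to reduce the asymmetric bilinear form to the symmetric quadratic case already handled by \Cref{lem:hanson-wright}, using the standard dilation trick. First I would stack the vectors into $\vy \defeq (\vx^\transp, \vz^\transp)^\transp \in \reals^{m+n}$ and introduce the symmetric block matrix
\begin{align}
    \mB \defeq \frac{1}{2}\begin{bmatrix} \vzero & \mA \\ \mA^\transp & \vzero \end{bmatrix},
\end{align}
chosen so that $\vy^\transp \mB \vy = \tfrac{1}{2}\paren{\vx^\transp \mA \vz + \vz^\transp \mA^\transp \vx} = \vx^\transp \mA \vz$. Under the stated hypotheses the vector $\vy$ has independent, zero-mean components with $\norm[\psi_\alpha]{y_i} \le M$, so \Cref{lem:hanson-wright} is directly applicable to the pair $(\vy, \mB)$.

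Next I would verify the ingredients of that bound. Since the diagonal blocks of $\mB$ are zero and the components of $\vy$ are independent with mean zero, $\E\bracket{\vy^\transp \mB \vy} = 0$, so the left-hand side of \Cref{lem:hanson-wright} is exactly $\Pr(|\vx^\transp \mA \vz| \ge t)$. For the matrix norms, a short computation gives $\norm[F]{\mB} = \tfrac{1}{\sqrt 2}\norm[F]{\mA} \le \norm[F]{\mA}$, and, since the nonzero eigenvalues of $\begin{bmatrix}\vzero & \mA \\ \mA^\transp & \vzero\end{bmatrix}$ are $\pm \sigma_k(\mA)$, we have $\norm{\mB} = \tfrac{1}{2}\norm{\mA} \le \norm{\mA}$.

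Plugging these into the quadratic-form tail bound of \Cref{lem:hanson-wright} and then replacing $\norm[F]{\mB}, \norm{\mB}$ by the larger quantities $\norm[F]{\mA}, \norm{\mA}$ (which only weakens the bound) yields
\begin{align}
    \Pr(|\vx^\transp \mA \vz| \ge t) \le 2\exp\bracket[\Big]{-c_\alpha \min\set[\Big]{\frac{t^2}{M^4 \norm[F]{\mA}^2}, \paren[\Big]{\frac{t}{M^2 \norm{\mA}}}^{\frac{\alpha}{2}}}},
\end{align}
after absorbing the numerical constants ($\tfrac12$, $2^{\alpha/2}$) into a new $c_\alpha$ depending only on $\alpha$. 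There is no genuine obstacle here — the argument is a routine reduction; the only point that warrants a moment's care is checking that the two regimes in the $\min$ behave monotonically under the rescaling of the Frobenius and operator norms, which is immediate. One could alternatively give a direct decoupling/Chernoff proof, but the dilation route is the shortest and avoids introducing independent copies.
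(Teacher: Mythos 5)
Your proof is correct and follows essentially the same route as the paper: both reduce to the symmetric Hanson--Wright bound of \Cref{lem:hanson-wright} via the block dilation of $\mA$, the only cosmetic difference being that you put the factor $\tfrac{1}{2}$ inside the dilated matrix while the paper omits it and instead rescales $t \mapsto t/2$ at the end. Your norm computations and the monotonicity argument for the two regimes of the $\min$ are all sound.
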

\begin{proof}
First apply \Cref{lem:hanson-wright} for
\begin{align}
    \vx' = \begin{bmatrix}
        \vx \\ \vz
    \end{bmatrix}
    \quad \text{and}
    \quad
    \mA' = \begin{bmatrix}
        \vzero & \mA^\transp \\
        \mA & \vzero
    \end{bmatrix}.
\end{align}
It is straightforward to see that $\norm[F]{\mA'}^2 = 2 \norm[F]{\mA}^2$ and $\norm{\mA'} = \norm{\mA}$, giving us the high probability bound
\begin{align}
    \Pr(|2 \vx^\transp \mA \vz| \geq t) &\leq 2 \exp \bracket[\Big]{-c_\alpha \min \set[\Big]{\frac{t^2}{2 M^4 \norm[F]{\mA}^2}, \paren[\Big]{\frac{t}{M^2 \norm{\mA}}}^{\frac{\alpha}{2}}}}.
\end{align}
By choosing $t' = t/2$, we obtain the stated result as an upper bound.
\end{proof}

We will also use this result on the spectral concentration for random matrices, which will allow us to conclude that certain spectral properties of the random data do not change even if a single data point is left out.
Here the trace norm, or nuclear norm, is defined as $\norm[\tr]{\mTheta} = \tr[(\mTheta \mTheta^\transp)^{1/2}]$.
\begin{lemma}[Theorem 1, \citealp{rubio2011spectral}]
\label{lem:stieltjes-concentration}%
Let $\mZ \in \complexset^{n \times p}$ be a random matrix consisting of i.i.d.\ random variables that have mean 0, variance 1, and finite 
absolute moment of order
$8 + \delta$ for some $\delta > 0$. Let $\mT \in \complexset^{n \times n}$ and $\mSigma \in \complexset^{p \times p}$ be positive semidefinite matrices with operator norm uniformly bounded in $n$, and let $\mX = \mT^{1/2} \mZ \mSigma^{1/2}$.
Then, for $\lambda > 0$,
as $n, p \to \infty$ such that
$0 < \liminf \tfrac{p}{n} \le \limsup \tfrac{p}{n} < \infty$,
we have for any $\mTheta$ having trace norm uniformly bounded in $p$,
\begin{equation}
    \tr \bracket[\Big]{\mTheta \Biginv{\frac{1}{n} \mX^\transp \mX + \lambda \mI}} - \tr \bracket[\Big]{\mTheta \Biginv{\xi \mSigma + \lambda \mI}} \asconv 0,
\end{equation}
where $\xi$ does not depend on $\mZ$ but solves $\xi = \frac{1}{n} \tr [\mT \inv{\mI + \frac{p}{n} \upsilon \mT}] > 0$ and $\upsilon = \frac{1}{p} \tr [\mSigma \inv{\xi \mSigma + \lambda \mI}] > 0$.
\end{lemma}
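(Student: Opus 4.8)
The plan is to establish this via the standard resolvent (Stieltjes transform) deterministic-equivalent method for separable-covariance sample covariance matrices. Write the resolvent $\mQ = \inv{\frac{1}{n}\mX^\transp\mX + \lambda\mI} \in \complexset^{p\times p}$ and the candidate deterministic equivalent $\bar\mQ = \inv{\xi\mSigma + \lambda\mI}$. Since $\mX^\transp\mX = \mSigma^{1/2}\mZ^\transp\mT\mZ\mSigma^{1/2} = \sum_{i=1}^n t_i \vr_i\vr_i^\transp$ with $\vr_i = \mSigma^{1/2}\vz_i$ and $\vz_i^\transp$ the $i$-th row of $\mZ$, the matrix is a sum of independent rank-one terms, which is exactly the structure these arguments exploit. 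The target $\tr[\mTheta\mQ] - \tr[\mTheta\bar\mQ] \asconv 0$ splits into two pieces: almost sure concentration of $\tr[\mTheta\mQ]$ around its expectation, and identification of $\E\,\tr[\mTheta\mQ]$ with $\tr[\mTheta\bar\mQ]$ up to $o(1)$.

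First I would establish the concentration step. Because $\mTheta$ has trace norm uniformly bounded in $p$ and $\norm{\mQ} \leq 1/\lambda$, replacing one row $\vz_i$ changes $\tr[\mTheta\mQ]$ by $O(1/n)$ via a rank-one perturbation bound (resolvent identity together with $\norm[\tr]{\mTheta}$ bounded). A martingale-difference decomposition along the filtration generated by the rows, combined with the Burkholder inequality and the $8+\delta$ moment assumption, then yields $\E\abs{\tr[\mTheta\mQ] - \E\,\tr[\mTheta\mQ]}^{q} = O(n^{-q/2})$ for $q$ large enough that Borel--Cantelli gives almost sure convergence to the expectation.

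The heart of the proof is the self-consistent equation. Using the resolvent identity $\lambda\mQ = \mI - \frac1n\sum_i t_i \mQ\vr_i\vr_i^\transp$ together with the Sherman--Morrison identity $\mQ\vr_i = \mQ_{-i}\vr_i / (1 + \frac{t_i}{n}\vr_i^\transp\mQ_{-i}\vr_i)$, where $\mQ_{-i}$ drops the $i$-th rank-one term, I would reduce everything to the scalar quadratic forms $\frac1n\vr_i^\transp\mQ_{-i}\vr_i$. The trace lemma for quadratic forms (Bai--Silverstein type, valid under the moment hypothesis) replaces these by $\frac1n\tr[\mSigma\mQ_{-i}]$, and a further rank-one bound replaces $\mQ_{-i}$ by $\mQ$, so that $\frac{t_i}{n}\vr_i^\transp\mQ_{-i}\vr_i \approx t_i\frac{p}{n}\upsilon$ with $\upsilon = \frac1p\tr[\mSigma\mQ]$. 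Substituting these into the resolvent identity and applying $\tr[\mTheta\,\cdot\,]$ shows that $\E\,\tr[\mTheta\mQ]$ satisfies, up to $o(1)$, the same relation as $\tr[\mTheta\bar\mQ]$, because $\frac1n\sum_i \frac{t_i}{1+\frac{p}{n}\upsilon t_i}\mSigma = \xi\mSigma$ with $\xi = \frac1n\sum_i t_i/(1 + \frac{p}{n}\upsilon t_i) = \frac1n\tr[\mT\inv{\mI + \frac{p}{n}\upsilon\mT}]$, recovering exactly the stated fixed-point system. A standard monotonicity argument on the map $(\xi,\upsilon)\mapsto(\xi,\upsilon)$, where each coordinate is the Stieltjes transform of a positive measure and hence monotone in its argument, shows the system has a unique positive solution, so $\bar\mQ$ is well-defined.

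The main obstacle will be the rigorous error control in the fixed-point step: one must show that the aggregate of the $n$ quadratic-form fluctuations $\frac1n\vr_i^\transp\mQ_{-i}\vr_i - \frac1n\tr[\mSigma\mQ_{-i}]$, once weighted and summed, contributes only $o(1)$ to $\tr[\mTheta\mQ]$. This is where the $8+\delta$ moment condition is essential: it supplies a sufficiently high moment bound on $\abs{\vr^\transp\mA\vr - \tr[\mSigma\mA]}$ to control the sum despite the weak polynomial tails, and it is precisely the reason the hypothesis is stated as it is. Because this is exactly the content of Theorem 1 of \citet{rubio2011spectral}, I would ultimately defer the delicate accounting of these error terms to their analysis rather than reproduce it; the route sketched above is the one their proof follows.
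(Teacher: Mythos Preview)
The paper does not prove this lemma; it is quoted verbatim as Theorem~1 of \citet{rubio2011spectral} and used as a black box in the proof of \Cref{thm:bks-clt}. Your proposal correctly outlines the standard deterministic-equivalent argument underlying that result and, appropriately, ultimately defers the delicate error bookkeeping to the cited reference---which is exactly what the paper does as well.
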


\begin{proof}[Proof of \Cref{thm:bks-clt}]
Without loss of generality, since $\mG$ can be absorbed into $\mSigma$ for an equivalent problem with $\mG' = n \mI$ and $\mSigma' = n \mG^{-1/2} \mSigma \mG^{-1/2}$, assume $\mG = n \mI$ and $\norm{\mSigma}$ is uniformly bounded. Furthermore, it suffices to prove the result for $m = 1$. Let $\vx_i \in \reals^p$ denote the $i$th row of $\mX$, having the form $\vx_i = \sqrt{t_i} \mSigma^{1/2} \vz_i$, and let $\mX_{-i} \in \reals^{(n-1) \times p}$ denote $\mX$ with $\vx_i$ removed, such that $\vx_i$ is independent of $\mX_{-i}$. Similarly, let $\vw_{-i} \in \reals^{n-1}$ denote $\vw$ with $w_i$ removed.

First, since $w_i$ and $\vw_{-i}$ are independent and zero mean, $\expect{\mu_i | \mX} = \expect{w_i \sum_j \tilde{J}_{ij} w_j | \mX} = \tilde{J}_{ii}$. It remains then to characterize the variance.
By the Woodbury identity, it is straightforward to obtain that
\begin{align}
    \mu_i = \frac{1}{1 + c_i} \paren{c_i + w_i \vx_i^\transp \inv{\mX_{-i}^\transp \mX_{-i} + n \mI} \mX_{-i}^\transp \vw_{-i}}
    \quad\text{where}\quad
    c_i = \vx_i^\transp \inv{\mX_{-i}^\transp \mX_{-i} + n \mI} \vx_i.
\end{align}
The first term, $c_i / (1 + c_i)$, is simply $\tilde{J}_{ii}$.
To determine the value of $c_i$, we can first apply \Cref{lem:hanson-wright} with $\vx = \vz_i$ and $\mA = t_i \mSigma^{1/2} \inv{\mX_{-i}^\transp \mX_{-i} + n \mI} \mSigma^{1/2}$. Note that both $\norm[F]{\mA}$ and $\norm{\mA}$ can be uniformly upper bounded by $C / n$ for some constant $C$. Therefore, 
\begin{align}
    \Pr(|c_i - t_i \tr[\mSigma \inv{\mX_{-i}^\transp \mX_{-i} + n \mI}]| > t) \leq 2 \exp \bracket[\Big]{-c_\alpha \min \set[\Big]{\frac{n^2 t^2}{C^2 M^4}, \paren[\Big]{\frac{nt}{C M^2}}^{\frac{\alpha}{2}}}}.
\end{align}
For every $t > 0$, if we sum the right-hand side over $n = 1$ to $\infty$, the sum of probabilities is finite. Thus by the Borel--Cantelli lemma, we have $c_i - t_i \tr[\mSigma \inv{\mX_{-i}^\transp \mX_{-i} + n \mI}] \asconv 0$. Now applying \Cref{lem:stieltjes-concentration} with $\mTheta = \mSigma / n$, we see that the trace term takes asymptotically almost surely the same value regardless of whether $\vx_i$ is left out of $\mX$ or not, since $p/n - p/(n - 1) \to 0$. Thus $c_i - t_i \eta \asconv 0$ for $\eta \defeq \tr [\mSigma \inv{\mX^\transp \mX + n \mI}]$.

Next, we need to apply a central limit theorem to obtain Gaussianity of the error. Note that
\begin{align}
    w_i \vx_i^\transp \inv{\mX_{-i}^\transp \mX_{-i} + n \mI} \mX_{-i}^\transp \vw_{-i}
    = \sum_{j \neq i} w_i \vx_i^\transp \inv{\mX_{-i}^\transp \mX_{-i} + n \mI} \vx_j w_j.
\end{align}
Since $w_j$ are independent of the remaining quantities, we can apply Lyapunov's central limit theorem provided we can show that the other terms are not too sparse. That is, we need to show that for some $\delta > 0$,
\begin{align}
    \frac{\sum_{j \neq i} |\vx_i^\transp \inv{\mX_{-i}^\transp \mX_{-i} + n \mI} \vx_j|^{2 + \delta}}{(\vx_i^\transp \inv{\mX_{-i}^\transp \mX_{-i} + n \mI} \mX_{-i}^\transp \mX_{-i} \inv{\mX_{-i}^\transp \mX_{-i} + n \mI} \vx_i)^{\frac{2 + \delta}{2}}} \asconv 0.
    \label{eq:lyapunov-limit}
\end{align}
To deal with the numerator, note that by the Woodbury identity,
\begin{align}
    \vx_i^\transp \inv{\mX_{-i}^\transp \mX_{-i} + n \mI} \vx_j = \frac{\vx_i^\transp \inv{\mX_{-ij}^\transp \mX_{-ij} + n \mI} \vx_j}{1 + \vx_j^\transp \inv{\mX_{-ij}^\transp \mX_{-ij} + n \mI} \vx_j},
    \label{eq:quadratic-ij-woodbury}
\end{align}
where $\mX_{-ij}$ is the matrix $\mX$ with both the $i$th and $j$th rows removed. We can then apply \Cref{cor:asymmetric-hanson-wright} with $\vx = \vz_i$, $\vz = \vz_j$, and $\mA = \mSigma^{1/2} \inv{\mX_{-ij}^\transp \mX_{-ij} + n \mI} \mSigma^{1/2}$, which must have $\norm[F]{\mA}$ and $\norm{\mA}$ upper bounded by $C / n$. Choosing $t = 1 / \sqrt{n}$, we have
\begin{align}
    \Pr\paren[\Big]{|\vx_i^\transp \inv{\mX_{-ij}^\transp \mX_{-ij} + n \mI} \vx_j| \geq \frac{1}{\sqrt{n}}} &\leq 2 \exp \bracket[\Big]{-c_\alpha \min \set[\Big]{\frac{n}{M^4 C^2}, \paren[\Big]{\frac{\sqrt{n}}{M^2 C}}^{\frac{\alpha}{2}}}}.
    \label{eq:x_ij_vanish}
\end{align}
Using a union bound over $j \neq i$, we can therefore say that all of these terms are bounded by $1 / \sqrt{n}$ with probability at least $1 - 2 \exp[-c n^{\alpha / 4} + \log n]$. With similar probability, the denominator of \cref{eq:quadratic-ij-woodbury} also concentrates for all $j \neq i$. Thus, we have
\begin{align}
    \sum_{j \neq i} |\vx_i^\transp \inv{\mX_{-i}^\transp \mX_{-i} + n \mI} \vx_j|^{2 + \delta}
    \overset{\mathrm{w.h.p.}}{\leq}
    \frac{n - 1}{n^{\frac{2 + \delta}{2}}}
    \asconv 0,
\end{align}
where ``$\mathrm{w.h.p.}$'' indicates that the inequality holds with sufficiently high probability that the sum of the complementary events across $n$ are finite, such that we can apply the Borel--Cantelli lemma to obtain almost sure convergence.
We now need only show that the denominator of \cref{eq:lyapunov-limit} is lower bounded. Making similar concentration arguments as above using \Cref{lem:hanson-wright}, it is sufficient to show that
\begin{align}
    \tr[\mSigma \inv{\mX_{-i}^\transp \mX_{-i} + n \mI} \mX_{-i}^\transp \mX_{-i} \inv{\mX_{-i}^\transp \mX_{-i} + n \mI}]
    \label{eq:nu-expression}
\end{align}
is uniformly bounded away from 0. Since $\mSigma$ has all eigenvalues bounded away from zero, we can ignore it, and so we need only lower bound the smallest singular value of $\mX_{-i}$. Again since $\mT$ and $\mSigma$ have lower bounded eigenvalues, we need only the smallest singular value of $\mZ_{-i}$ to be lower bounded by $c \sqrt{n}$, which we have by classical results in random matrix theory \citep{silverstein1998eigenvalues} almost surely. Then $\tr[\mX_{-i}^\transp \mX_{-i} \inv[2]{\mX_{-i}^\transp \mX_{-i} + n \mI}] \geq cp/(c + 1)^2n > 0$, and so we can apply Lyapunov's central limit theorem to obtain that almost surely over $\mX$,
\begin{align}
    \frac{w_i \vx_i^\transp \mSigma^{1/2} \inv{\mX_{-i}^\transp \mX_{-i} + n \mI} \mX_{-i}^\transp \vw_{-i}}
    {\sqrt{\vx_i^\transp \inv{\mX_{-i}^\transp \mX_{-i} + n \mI} \mX_{-i}^\transp \mX_{-i} \inv{\mX_{-i}^\transp \mX_{-i} + n \mI} \vx_i}}
    \dconv \normal(0, 1).
\end{align}
To get a simpler value for the expression in \cref{eq:nu-expression}, recall that the derivative of a matrix resolvent is a second order resolvent: $\partial/\partial \lambda \inv{\mA + \lambda \mI} = -\inv[2]{\mA + \lambda \mI}$, meaning that we can apply \Cref{lem:stieltjes-concentration} to second-order resolvent polynomials as well (such ``asymptotic equivalences'' hold for derivatives provided all sequences are bounded as they are in our case; see, Theorem~11 of \citealp{dobriban2020wonder}), allowing us to replace $\mX_{-i}$ with $\mX$:
\begin{multline}
    \tr\bracket[\big]{\mSigma \inv{\mX_{-i}^\transp \mX_{-i} + n \mI} \mX_{-i}^\transp \mX_{-i} \inv{\mX_{-i}^\transp \mX_{-i} + n \mI}} \\
    - \underbrace{\tr \bracket[\big]{\mSigma \inv{\mX^\transp \mX + n \mI} \mX^\transp \mX \inv{\mX^\transp \mX + n \mI}}}_{\rdefeq \nu} \asconv 0.
\end{multline}

It remains to show that the element-wise noise of $\mu$ is asymptotically conditionally uncorrelated. That is, we need to show that for $i \neq j$.
\begin{align}
    \E \bracket[\Big]{w_i \vx_i^\transp \inv{\mX^\transp \mX + n \mI} \mX_{-i}^\transp \vw_{-i}
    w_j \vx_j^\transp \inv{\mX^\transp \mX + n \mI} \mX_{-j}^\transp \vw_{-j} \Big| \mX} \to 0.
\end{align}
Note that this term can be expressed as a sum $\sum_{k\ell}w_i A_{ik} w_k w_j B_{j\ell} w_\ell$, and that we can therefore exploit the fact that $\expect{w_i} = \expect{w_i^3} = 0$. Observe that we can never have $i = k$ or $j = \ell$, and of course $i \neq j$. The remaining term, where $k = j$ and $\ell = i$, vanishes:
\begin{align}
    \vx_i^\transp \inv{\mX^\transp \mX + n \mI} \vx_j
    \vx_j^\transp \inv{\mX^\transp \mX + n \mI} \vx_i \overset{\mathrm{w.h.p.}}{\leq} \frac{1}{n} \asconv 0,
\end{align}
which follows by a similar arguments based on \Cref{cor:asymmetric-hanson-wright} made earlier in the proof (first apply the Woodbury identity twice to extract $\vx_i$ and $\vx_j$ from the inverse as a shrinking scalar, then apply \cref{eq:x_ij_vanish}), proving the stated result for $m = 1$.
To account for $m > 1$, we can simply average Gaussian variables.
\end{proof}

\bibliographystyle{abbrvnat}
\bibliography{references}

\end{document}